\documentclass[11pt, a4paper]{article}

\textwidth 6.5in \oddsidemargin 0in \textheight 9in \topmargin
-0.5in

\usepackage{amsmath,comment}
\usepackage{amssymb}
\usepackage{amsthm}
\usepackage{bm}
\usepackage{graphicx}
\usepackage{psfrag}
\usepackage{enumerate}
\usepackage{mathrsfs}

\newtheorem{thm}{Theorem}[section]

\newtheorem{lem}[thm]{Lemma}

\newtheorem{assumption}[thm]{Assumption}
\newtheorem{algorithm}[thm]{Algorithm}

\newtheorem{definition}[thm]{Definition}

\newtheorem{example}[thm]{Example}

\newtheorem{remark}[thm]{Remark}
\newenvironment{rem}{\begin{remark}\rm}{\end{remark}}
\newtheorem{tab}{Table}

\title{The Minimum Number of Hubs in Networks}

\author{\small \begin{tabular}{ccc}
Easton Li Xu & Guangyue Han\\
The University of Hong Kong & The University of Hong Kong\\
email: xulimc@gmail.com & email: ghan@hku.hk\\
\end{tabular}}

\date{\empty}

\begin{document} \maketitle

\begin{abstract}
In this paper, a hub refers to a non-terminal vertex of degree at least three. We study the minimum number of hubs needed in a network to guarantee certain flow demand constraints imposed between multiple pairs of sources and sinks. We prove that under the constraints, regardless of the size or the topology of the network, such minimum number is always upper bounded and we derive tight upper bounds for some special parameters. In particular, for two pairs of sources and sinks, we present a novel path-searching algorithm, the analysis of which is instrumental for the derivations of the tight upper bounds.
\end{abstract}

\smallskip
\section{Introduction}

Consider a network $G=(V, E)$, where $V$ denotes the set of vertices in $G$, and $E$ denotes the set of edges in $G$. A vertex in $G$ is said to be a {\it source} if it is only incident with outgoing edges, and a {\it sink} if it is only incident with incoming edges. Often, a source or sink is referred to as a {\it terminal} vertex. A non-terminal vertex is said to be a {\it hub} if its degree is greater than or equal to $3$. In this paper, we are primarily concerned with the minimum number of hubs needed when certain constraints on the flow demand between multiple pairs of sources and sinks are imposed. The flow demand constraints considered in this paper will be in terms of the vertex-cuts between pairs of sources and sinks. This can be justified by a vertex version of the max-flow min-cut theorem~\cite{be1998}, which states that for a network with infinite edge-capacity and unit vertex-capacity, the maximum flow between one source and one sink is equal to the minimum vertex-cut between them. Here, we remark that with appropriately modified setup, our results can be stated in terms of edge-cuts as well.

More precisely, for given $C_1, C_2, \ldots, C_n \in \mathbb{N}$, let $\mathcal{N}(C_1, C_2, \ldots, C_n)$ denote the set of all finite networks $G$ (see Figure~\ref{illustrator} for an example) such that
\begin{itemize}
\item there are $n$ sources $S_1, S_2, \ldots, S_n$ and $n$ sinks $R_1, R_2, \ldots, R_n$ in $G$;
\item all edges in $G$, except those incident with a source or sink, are undirected (alternatively, bi-directional);
\item for each feasible $i$, the minimum vertex-cut from $S_i$ to $R_i$ is $C_i$.
\end{itemize}
Now, we define
$$
\mathscr{H}(C_1, C_2, \ldots, C_n) \triangleq \sup_{G \in \mathcal{N}(C_1, C_2, \ldots, C_n)} \min_{\substack{\widehat{G} \subset G \\ \widehat{G} \in \mathcal{N}(C_1, C_2, \ldots, C_n)}} \mathcal{H}(\widehat{G}),
$$
where $\mathcal{H}(\widehat{G})$ denotes the number of hubs in $\widehat{G}$. The above definition can be roughly interpreted as follows: for a given $G$, we try to find a subgraph $\widehat{G}$ that contains the minimum number of hubs required to satisfy the vertex-cut constraints, and $\mathscr{H}(C_1,C_2,\ldots,C_n)$ gives us the minimum number corresponding to the worst-case scenarios among all possible $G$.

\begin{figure}
  \centering
  \includegraphics[width=0.3\textwidth]{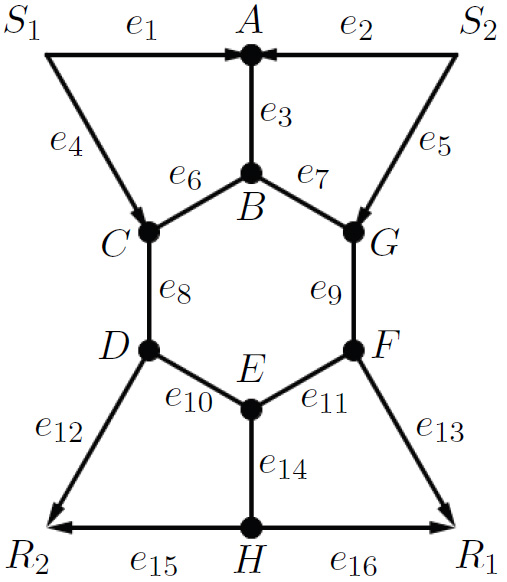}
  \caption{An illustrative graph in $\mathcal{N}(2,2)$ with $8$ hubs.}\label{illustrator}
\end{figure}
At first glance, $\mathscr{H}(C_1, C_2, \ldots, C_n)$ can be infinite. One of the main (and somewhat surprising) results in this paper, Theorem~\ref{finiteness}, however, states that for any given $C_1, C_2, \ldots, C_n$, $\mathscr{H}(C_1, C_2, \ldots, C_n)$ is in fact finite. With finiteness confirmed, we are primarily interested in computing the value of $\mathscr{H}(C_1, C_2, \ldots, C_n)$. We say a graph $G$ is $\emph{minimal}$ in $\mathcal{N}(C_1,C_2,\ldots,C_n)$, if $G \in \mathcal{N}(C_1, C_2, \ldots, C_n)$, but for any $e \in E$, $G\backslash\{e\} \not \in \mathcal{N}(C_1,C_2,\ldots,C_n)$. It then follows from the fact that every graph in $\mathcal{N}(C_1, C_2, \ldots, C_n)$ has at least one minimal subgraph that
$$
\mathscr{H}(C_1, C_2, \ldots, C_n) = \sup_{\substack{G \in \mathcal{N}(C_1, C_2, \ldots, C_n)\\ G\textrm{ is minimal}}} \mathcal{H}(G).
$$

The vertex-connectivity version of the classical Menger's theorem~\cite{me1927} states that for a network with one pair of source $S$ and sink $R$ with the minimum vertex-cut between them being $C$, there exist $C$ vertex-disjoint paths connecting $S$ and $R$, which immediately implies that $\mathscr{H}(C)=0$ for any given $C$. Theorem~\ref{finiteness} states that for any given $G \in \mathcal{N}(C_1, C_2, \ldots, C_n)$, one can always find a subgraph $\widehat{G}$ of $G$ such that $\mathcal{H}(\widehat{G})$ is upper bounded by a constant, which is independent of the choice of $G$. In some sense, Theorem~\ref{finiteness} can be viewed as a generalization of the vertex-connectivity version of Menger's theorem.

Mathematically, the proposed problem of computing $\mathscr{H}(C_1, C_2, \ldots, C_n)$ is a natural combinatorial optimization problem. On a more practical side, hubs in networks naturally correspond to more costly vertices. For instance, in a transportation network, as opposed to ``relaying'' vertices with degree $2$, hubs may have to be better equipped for traffic scheduling; for this reason, when designing the route map, an airline may need to avoid running too many airline hubs to reduce the cost. So, as might be expected, $\mathscr{H}(C_1, C_2, \ldots, C_n)$ is of significance to cost-minimizing resource allocation in transportation networks.

To the best of our knowledge, the proposed problem of computing or estimating $\mathscr{H}$ has not yet been examined previously and there is little related work in the vast literature of graph theory. On the other hand, to a great extent, this work is motivated by the study of network encoding complexity (see~\cite{la2006} and references therein), where the number of encoding vertices in directed networks is of primary concern. Moreover, our approaches to tackle the problem are influenced by those in network encoding complexity theory, particularly, to a greater extent, those in~\cite{ha2011, xsh2012}.

The remainder of the paper is organized as follows. In Section~\ref{minimalgraph}, we give necessary and sufficient conditions (Theorem~\ref{equivalent}) for a graph being minimal in $\mathcal{N}(C_1, C_2)$. In Section~\ref{sectionminimalrepresentation}, we introduce the notion of a representation of a graph in $\mathcal{N}(C_1, C_2)$ and we present the structural decomposition theorem (Theorem~\ref{decomposition}) for representations of minimal graphs in $\mathcal{N}(C_1, C_2)$. We will introduce in Section~\ref{xu-algorithm} a novel path-searching algorithm, the analysis of which will aptly produce an upper bound on $\mathscr{H}(C_1, C_2)$ for any given $C_1, C_2$. In Section~\ref{sectionhmn}, we derive the value of $\mathscr{H}(C_1,C_2)$ (Theorem~\ref{Nmn2mn}), which is a main result in this paper. Another main result is Theorem~\ref{finiteness}, which establishes the finiteness of $\mathscr{H}(C_1, C_2, \ldots, C_n)$, $n \geq 3$, through a recursive bounding argument. The remaining part of Section~\ref{more-than-two} will be devoted to the derivations of the values of $\mathscr{H}$ for some special parameters (Theorem~\ref{thm11mn}, \ref{thmh222}).

\smallskip
\section{Minimal Graphs in $\mathcal{N}(C_1,C_2)$}\label{minimalgraph}

We first introduce some notation and terminologies that will be used throughout the paper.

A sequence of edges in $G$, $e_i=(u_i, v_i)$, $i=1, 2, \ldots, d$, with $v_i=u_{i+1}$, $i=1, 2, \ldots, d-1$, can be linked to form a {\it path} $p$, denoted by $p = e_1 \circ e_2 \circ \cdots \circ e_d$; furthermore, $p$ is called a {\it cycle} if $v_d=u_1$. A path $p$ of this form is said to be {\it directed} each $e_i$ is oriented such that all of them concatenate.

For a path $\beta$ and two vertices $u$, $v$ of $\beta$, let $\beta[u,v]$ denote the subpath of $\beta$ between $u$ and $v$. For a directed path $\beta=e_1 \circ e_2 \circ \cdots \circ e_d$, let $h(\beta)$, $t(\beta)$ denote the head, tail of $\beta$, respectively; and we say $e_i$ is $\emph{smaller}$ than $e_j$ on $\beta$, denoted by $e_i < e_j$, if $h(e_j)$ is the head of the directed path $\beta[e_i, e_j]$ (or alternatively, $e_j$ is {\it larger} than $e_i$, denoted by $e_j > e_i$).

For a graph $G \in \mathcal{N}(C_1, C_2, \ldots, C_n)$, by the vertex-connectivity version of Menger's theorem, for each $i$, one can find a set $\alpha_i$ of $C_i$ vertex-disjoint $\phi$-paths from $S_i$ to $R_i$. If the choice of $\alpha_i$ is unique, $\alpha_i$ is said to be $\emph{non-reroutable}$, otherwise it is said to be {\it reroutable}. $G$ is said to be {\it non-reroutable} if all $\alpha_i$'s are non-reroutable, {\it reroutable} otherwise. For any index set $\{j_1, j_2, \ldots, j_k\} \subset \{1, 2, \ldots, n\}$, let $G_{\alpha_{j_1} \alpha_{j_2} \cdots \alpha_{j_k}}$ denote the subgraph of $G$ induced on the edges of $\alpha_{j_1}, \alpha_{j_2}, \ldots, \alpha_{j_k}$-paths. $G$ is said to be a {\it $(C_1, C_2, \ldots, C_n)$-graph} if $G=\bigcup_{i=1}^n G_{\alpha_i}$, that is, each edge in $G$ belongs to some $\alpha_i$-path. In order to compute $\mathscr{H}(C_1, C_2, \ldots, C_n)$, it is enough to only consider all $(C_1, C_2, \ldots, C_n)$-graphs, since $\bigcup_{i=1}^n G_{\alpha_i}$ is a subgraph of $G$ and also in $\mathcal{N}(C_1,C_2,\ldots,C_n)$.

Sections~\ref{minimalgraph} to \ref{sectionhmn} will be devoted to derive the value of $\mathscr{H}(C_1, C_2)$. For notational convenience only, we often rewrite $\alpha_1$, $\alpha_2$ as $\phi$, $\psi$, respectively.

For a $(C_1, C_2)$-graph $G$, an edge in $G$ is said to be \emph{public} if it is shared by a $\phi$-path and a $\psi$-path, \emph{private} otherwise. Evidently, for each $i$, from $S_i$ to $R_i$, each $\phi$ or $\psi$-path in $G$ induces a natural orientation to all its edges. We note that a public edge in $G$ may have opposite $\phi$-direction and $\psi$-direction (such ``inconsistency'' will be dealt with in Section~\ref{sectionminimalrepresentation}).

We say $G$ can be \emph{naturally orientable} if each public edge in $G$ has the same natural $\phi$ and $\psi$-direction. A cycle $e_1\circ e_2\circ \cdots \circ e_d$ in $G$, where $e_i=(u_i,v_i)$, $v_i=u_{i+1}$ for $i=1,2,\ldots,d-1$ and $v_d=u_1$, is said to be a \emph{$\phi$-consistent cycle}, if it satisfies the following property: for any $1\le i\le d$, if $e_i$ belongs to a $\phi$-path, then its natural $\phi$-direction is from $u_i$ to $v_i$. And we similarly define a $\psi$-consistent cycle.

The following theorem gives necessary and sufficient conditions for a $(C_1, C_2)$-graph being minimal.
\begin{thm}\label{equivalent}
The following three statements are equivalent for a $(C_1, C_2)$-graph $G$:
\begin{itemize}
\item[\bf (i)] $G$ is minimal;
\item[\bf (ii)] $G$ is non-reroutable;
\item[\bf (iii)] $G$ has no $\phi$ or $\psi$-consistent cycle.
\end{itemize}
\end{thm}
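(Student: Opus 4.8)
The plan is to prove the cyclic chain $(i)\Rightarrow(ii)\Rightarrow(iii)\Rightarrow(i)$, working throughout with the two flows $f_\phi,f_\psi$ that send a unit along each edge of $\phi$, resp.\ $\psi$, in its natural direction. Two preliminary observations will be used repeatedly. First, since a source (resp.\ sink) is incident only with outgoing (resp.\ incoming) edges, no cycle of $G$ can pass through a terminal; moreover an edge at $S_1$ cannot lie on a $\psi$-path (the terminals being distinct, a $\psi$-path can neither start at nor enter the source $S_1$), so all edges at $S_1$ are $\phi$-edges, $S_1$ has exactly $C_1$ outgoing edges, and any two systems of $C_1$ vertex-disjoint $S_1$--$R_1$ paths agree near $S_1$ and, symmetrically, near $R_1$. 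Second, $G_\phi$ (resp.\ $G_\psi$) with its two terminals deleted is a disjoint union of paths, hence acyclic; consequently no cycle of $G$ consists solely of $\psi$-edges, and none solely of $\phi$-edges.

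For $(i)\Rightarrow(ii)$ I argue the contrapositive: if $G$ is reroutable, say $\phi$ admits a second system $\phi'\neq\phi$, then $G$ is not minimal. As edge sets they differ, so $\phi\setminus\phi'\neq\emptyset$, and I claim some $e\in\phi\setminus\phi'$ is private. Otherwise every edge of $\phi\setminus\phi'$ is a $\psi$-edge; since every edge of $\phi'\setminus\phi$ lies in $G_\psi$ as well, the nonzero circulation $f_\phi-f_{\phi'}$ (which avoids the terminals by the first observation) is supported entirely on $\psi$-edges, and its support then contains a cycle lying in $G_\psi$ and avoiding the terminals, contradicting the second observation. For such a private $e$, the system $\phi'\subseteq G\setminus\{e\}$ keeps the $S_1$--$R_1$ cut at $C_1$, while $\psi$ is untouched and keeps the $S_2$--$R_2$ cut at $C_2$; thus $G\setminus\{e\}\in\mathcal N(C_1,C_2)$ and $G$ is not minimal.

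The implication $(iii)\Rightarrow(i)$ is again handled by its contrapositive, and is the easy half of the flow bookkeeping. If $G$ is not minimal, some $e$ may be deleted with $G\setminus\{e\}\in\mathcal N(C_1,C_2)$; taking $e$ on, say, a $\phi$-path, $G\setminus\{e\}$ still carries $C_1$ vertex-disjoint $S_1$--$R_1$ paths, giving a system $\phi'$ with $e\notin\phi'$, so $\phi'\neq\phi$. The difference $f_\phi-f_{\phi'}$ is a nonzero circulation supported on non-terminal edges; any simple directed cycle $\delta$ in it traverses each $\phi$-edge (these are exactly the edges of $\phi\setminus\phi'$) in its natural $\phi$-direction, while the remaining edges of $\delta$ lie in $\phi'\setminus\phi$ and hence are not $\phi$-edges at all. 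Therefore $\delta$ is a $\phi$-consistent cycle, so $(iii)$ fails.

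The remaining implication $(ii)\Rightarrow(iii)$ --- equivalently, that a consistent cycle forces reroutability --- is where I expect the real difficulty. One would like to reverse the previous paragraph: given a $\phi$-consistent cycle $\gamma$ (which, by the second observation, must contain a private $\phi$-edge), push a unit of flow backwards around $\gamma$ to obtain a second $\phi$-system. The obstruction is that $\gamma$ may cross another $\phi$-path transversally at a shared hub $v$, entering and leaving $v$ along two $\psi$-edges; at such a $v$ the edge-flow $f_\phi-\mathbf 1_\gamma$ routes two units through $v$, so it is a legitimate flow of value $C_1$ but not a union of vertex-disjoint paths, and the naive reroute of $\phi$ breaks down. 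My plan for overcoming this is to pass to the vertex-split network and follow the reversed cycle as a walk in the residual network of $f_\phi$: the walk is residual except exactly at the transversal hubs, and at each such hub $\gamma$ is forced to use edges private to $\psi$, so a blocked $\phi$-augmentation can be traded for a $\psi$-augmenting closed walk, with the private $\phi$-edges of $\gamma$ serving as chords of $\psi$-paths. Extracting a genuine directed cycle from the residual network of $f_\phi$ or of $f_\psi$ then certifies that $\phi$ or $\psi$ is non-unique, i.e.\ that $G$ is reroutable. Making this trade precise --- in particular, proving that an obstruction to rerouting one commodity always yields a rerouting of the other --- is the technical heart of the argument, and is the step I would expect to require the most care.
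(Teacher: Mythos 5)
Your proposal does not actually prove \textbf{(ii)} $\Rightarrow$ \textbf{(iii)}: you say yourself that making the residual-network trade precise ``is the step I would expect to require the most care,'' and no argument is supplied. Since your two completed steps give only (iii) $\Rightarrow$ (i) $\Rightarrow$ (ii), nothing implies (iii), and the three-way equivalence is not established. Moreover, the trade you envisage --- that a blocked $\phi$-augmentation must always yield a rerouting of $\psi$ --- is not how the difficulty gets resolved in the paper, and no such commodity-swapping is needed. The paper's proof of (ii) $\Rightarrow$ (iii) is a direct extremal argument: among all $\phi$-consistent cycles it chooses one, $p_1\circ e_1\circ p_2\circ e_2\circ\cdots\circ p_d\circ e_d$ with each $p_i$ a subpath of a $\phi$-path and each $e_i$ a private $\psi$-edge, having the \emph{minimum} number $d$ of private $\psi$-edges. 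Minimality of $d$ forces the blocks $p_i=\phi_i[u_i,v_i]$ to lie on pairwise distinct $\phi$-paths (two blocks on one $\phi$-path can be shortcut along that path, yielding a consistent cycle with fewer private $\psi$-edges), whence $d\le C_1$ and the explicit cyclic shift $\hat\phi_i=\phi_i[S_1,u_i]\circ(u_i,v_{i-1})\circ\phi_{i-1}[v_{i-1},R_1]$ (with the obvious wrap-around for $i=1$) reroutes $\phi$ alone, contradicting non-reroutability. In other words, the transversal-hub obstruction you identify is dissolved by choosing the cycle extremally, not by passing to a residual network; without some idea of this kind your sketch remains a plan, not a proof.

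There is also a soundness gap inside your (i) $\Rightarrow$ (ii). To produce a private edge in $\phi\setminus\phi'$ you argue that otherwise the circulation $f_\phi-f_{\phi'}$ is supported entirely on $\psi$-edges. But the support of this circulation is not just $(\phi\setminus\phi')\cup(\phi'\setminus\phi)$: in an undirected graph an edge may be used by \emph{both} systems in opposite directions; such an edge lies in $\phi\cap\phi'$, carries net value $2$ in the circulation, and nothing in your argument prevents it from being a private $\phi$-edge, in which case the support need not lie in $G_\psi$ and your ``cycle inside $G_\psi$'' contradiction evaporates. (Your (iii) $\Rightarrow$ (i) step happens to survive the same phenomenon, because on an oppositely-traversed shared edge the net flow points in the natural $\phi$-direction, so the extracted cycles are still $\phi$-consistent; but the privacy claim in (i) $\Rightarrow$ (ii) does not.) The paper's corresponding argument (its Step 3, which proves (i) or (iii) $\Rightarrow$ (ii) in one stroke) avoids circulations altogether: taking an alternative system $\hat\phi$ agreeing with $\phi$ at $S_1$, it tracks the vertices $v_{i_t}$ where $\hat\phi$-paths leave $\phi$-paths and the vertices $u_{i_t}$ where they next meet them, extracts from this switching sequence a $\phi$-consistent cycle, and then shows this cycle must contain a private $\phi$-edge lying on no $\hat\phi$-path, which can therefore be deleted. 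You would need either to handle the oppositely-traversed shared edges within your circulation argument or to fall back on such a switching construction.
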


\begin{proof}

\textbf{1.} \textbf{(ii) $\bm{\Rightarrow}$ (i)}:
Any edge $e$ in $G$ must belong to some $\phi$ or $\psi$-path. After deleting $e$ from $G$, we no longer find $C_i$ vertex-disjoint paths from $S_i$ to $R_i$ for some $i\in \{1,2\}$. So $G\backslash \{e\}\not \in \mathcal{N}(C_1,C_2)$, and therefore $G$ is minimal.

\textbf{2.} \textbf{(ii) $\bm{\Rightarrow}$ (iii)}:
Suppose, by way of contradiction, that $G$ has a $\phi$-consistent cycle $O$, which can be written in the following form
$$p_1 \circ e_1 \circ p_2 \circ e_2\circ \cdots \circ p_d\circ e_d,$$
where each $p_i$ is a subpath of some $\phi$-path, and each $e_i$ is a private $\psi$-edge. Furthermore, we assume $O$ has the smallest $d$ (the number of private $\psi$-edges) among all $\phi$-consistent cycles. Then, each $p_i$ belongs to a different $\phi$-path (since otherwise we can always find a $\phi$-consistent cycle with fewer private $\psi$-edges), which further implies that $d\le C_1$. Suppose $p_i=\phi_i[u_i,v_i]$ for $1\le i\le d$, $e_i=(v_i,u_{i+1})$ for $1\le i\le d-1$ and $e_d=(v_d,u_1)$. Then one can find another group of $C_1$ vertex-disjoint paths $\hat{\phi}=\{\hat{\phi}_1,\hat{\phi}_2,\ldots,\hat{\phi}_{C_1}\}$ from $S_1$ to $R_1$ in $G$, where
\begin{displaymath}
\hat{\phi}_i=\begin{cases}
\phi_1[S_1,u_1] \circ (u_1,v_d) \circ \phi_d[u_d,R_1]&\textrm{ for }i=1;\\
\phi_i[S_1,u_i] \circ (u_i,v_{i-1}) \circ \phi_{i-1}[v_{i-1},R_1]&\textrm{ for }2\le i\le d;\\
\phi_i&\textrm{ for }d+1\le i\le C_1,\\
\end{cases}
\end{displaymath}
which contradicts the assumption that $G$ is non-reroutable. With a parallel argument, we conclude that $\psi$-consistent cycles do not exist either.

\textbf{3.} \textbf{(i) or (iii) $\bm{\Rightarrow}$ (ii)}:
Suppose, by contradiction, that $G$ is reroutable, and furthermore, by symmetry, that there exists another group of $C_1$ vertex-disjoint paths $\hat{\phi}=\{\hat{\phi}_1,\hat{\phi}_2,\ldots,\hat{\phi}_{C_1}\}$ from $S_1$ to $R_1$ in $G$ with $\hat{\phi}_i$ sharing the same outgoing edge from $S_1$ as $\phi_i$ for every $i$. Pick a $\hat{\phi}$-path, say, $\hat{\phi}_{i_1}$, such that $\hat{\phi}_{i_1} \neq \phi_{i_1}$, and let $v_{i_1}$ denote the smallest vertex on $\phi_{i_1}$ (under the natural $\phi$-direction) where they leave each other. Assume that, after $v_{i_1}$, $\hat{\phi}_{i_1}$ first meets some $\phi$-path, say, $\phi_{i_2}$ at the vertex $u_{i_1}$. Denote by $v_{i_2}$ the smallest vertex where $\hat{\phi}_{i_2}$ and $\phi_{i_2}$ leave each other. Assume that, after $v_{i_2}$, $\hat{\phi}_{i_2}$ first meets some $\phi$-path, say, $\phi_{i_3}$ at the vertex $u_{i_2}$. Continue the procedure in a similar manner to obtain an index sequence $i_1,i_2,\ldots,i_t,\ldots$, and similarly define $v_{i_t}$'s and $u_{i_t}$'s. Pick the smallest $k$ such that $i_k=i_j$ for some $j < k$. Notice that $v_{i_{t+1}}$ is smaller than $u_{i_t}$ on $\phi_{i_{t+1}}$, which easily follows from three facts:
\begin{itemize}
\item[1)] $\hat{\phi}_{i_{t+1}}$ first takes apart from $\phi_{i_{t+1}}$ at $v_{i_{t+1}}$;
\item[2)] $\hat{\phi}_{i_t}$ meets $\phi_{i_{t+1}}$ at $u_{i_t}$;
\item[3)] $\hat{\phi}_{i_t}$ and $\hat{\phi}_{i_{t+1}}$ are vertex-disjoint.
\end{itemize}
Hence, we conclude that
\begin{align*}
\nonumber &\hat{\phi}_{i_{j}}[v_{i_{j}},u_{i_{j}}]
\circ \phi_{i_{j+1}}[u_{i_j},v_{i_{j+1}}]
\circ \hat{\phi}_{i_{j+1}}[v_{i_{j+1}},u_{i_{j+1}}]
\circ \phi_{i_{j+2}}[u_{i_{j+1}},v_{i_{j+2}}]
\circ \\
\cdots &\circ \hat{\phi}_{i_{k-2}}[v_{i_{k-2}},u_{i_{k-2}}]
\circ \phi_{i_{k-1}}[u_{i_{k-2}},v_{i_{k-1}}]
\circ \hat{\phi}_{i_{k-1}}[v_{i_{k-1}},u_{i_{k-1}}]
\circ \phi_{i_{j}}[u_{i_{k-1}},v_{i_{j}}]
\end{align*}
is a $\phi$-consistent cycle in $G$.

Since all $\psi$-paths are vertex-disjoint and the above cycle does not contain any terminal vertex, at least one edge $e$ in this cycle does not belong to any $\psi$-path (that is, $e$ is a private $\phi$-edge). Notice that each edge of $\hat{\phi}_{i_{t}}[v_{i_{t}},u_{i_{t}}]$, $j\le t\le k-1$, is a private $\psi$-edge, and each edge of $\phi_{i_{t+1}}[u_{i_t},v_{i_{t+1}}]$, $j\le t\le k-1$, does not belong to any $\hat{\phi}$-path. So, $e$ must belong to $\phi_{i_{t+1}}[u_{i_t},v_{i_{t+1}}]$ for some $t$ with $j\le t\le k-1$, and hence $e$ does not belong to any $\hat{\phi}$-path. In the graph $G\backslash\{e\}$, we can find a set $\hat{\phi}$ of $C_1$ vertex-disjoint paths from $S_1$ to $R_1$, and a set $\psi$ of $C_2$ vertex-disjoint paths from $S_2$ to $R_2$. Thus $G\backslash\{e\}\in \mathcal{N}(C_1,C_2)$ and $G$ is not minimal, a contradiction.
\end{proof}

\begin{remark}\label{ifreroutable}
The third part of the proof of the theorem has actually proved that for a $(C_1,C_2)$-graph $G$, if $\phi$ (resp. $\psi$) is reroutable, then there exists a private $\phi$ (resp. $\psi$)-edge $e$ such that $G\setminus\{e\}\in \mathcal{N}(C_1,C_2)$. This fact will be used later in the paper.
\end{remark}

\smallskip
\section{Representations and Structural Decomposition} \label{sectionminimalrepresentation}

In this section, we will transform a minimal $(C_1,C_2)$-graph $G$ into $G^\circ$, its representation, through the following steps:

{\bf Step 1 [Remove Relays]}: In this step, we remove all non-terminal vertices in $G$ with degree $2$. In more detail, for any non-terminal vertex $v$ with $d(v)=2$ and $e_1=(v,u_1)$, $e_2=(v,u_2)$ being the two edges incident with $v$, where $u_1\ne u_2$, delete edges $e_1$, $e_2$ and vertex $v$, and then add a new edge $(u_1,u_2)$. Let $G_1$ denote the resulting graph.

{\bf Step 2 [Stretch Crossings]}: We say a $\phi$-path and a $\psi$-path {\it cross} at vertex $v$ if they share $v$, but not any edges incident with $v$. In this step, we convert each crossing into a pair of degree $3$ vertices. In more detail, for any vertex $v$ in $G_1$ with $d(v)=4$ and $e_1=(v,u_1)$, $e_2=(v,u_2)$ being the two $\phi$-edges incident with $v$, $e_3=(v,u_3)$, $e_4=(v,u_4)$ being the two $\psi$-edges incident with $v$, where all $u_i$ are all distinct, delete edges $e_1$, $e_2$, $e_3$, $e_4$ and vertex $v$, and then add two vertices $v_1$, $v_2$ and edges $(v_1,v_2)$, $(u_1, v_1)$, $(u_3, v_1)$, $(u_2, v_2)$ and $(u_4, v_2)$. Let $G_2$ denote the resulting graph.

{\bf Step 3 [Match Directions]}: For any public edge $e$ in $G_2$ with inconsistent $\phi$ and $\psi$-direction, we will perform the following operations to obtain consistency: Assume edge $e=(u,v)$ belongs to both path $\phi_i=\phi_i[S_1,w_1] \circ(w_1,u)\circ (u,v)\circ(v,w_2)\circ \phi_i[w_2,R_1]$ and path $\psi_j=\psi_j[S_2,w_3]\circ(w_3,v)\circ(v,u)\circ(u,w_4)\circ\psi_j[w_4,R_2]$. We delete edges $(w_3,v),(u,w_4)$, add edges $(w_3,u),(v,w_4)$, and then obtain a new $\psi$-path
$$
\psi_j[S_2,w_3]\circ (w_3,u)\circ (u,v)\circ (v,w_4)\circ \psi_j[w_4,R_2].
$$
Let $G^\circ$ denote the resulting graph, which, evidently, is naturally orientable; let $\overrightarrow{G}^{\circ}$ denote the directed version of $G^{\circ}$, equipped with the consistent natural orientation induced on all its $\phi$ and $\psi$-paths. Apparently, $G^\circ \in \mathcal{N}(C_1, C_2)$ with its $\phi$ and $\psi$-paths determined by the original ones, and all the non-terminal vertices in $G^{\circ}$ are hubs.

The obtained $G^\circ$ after the above three steps is said to be a \emph{representation} of $G$. The following lemma states that $G^{\circ}$ must be a minimal $(C_1, C_2)$-graph as well.
\begin{lem}\label{minimalrepresentation}
The representation $G^{\circ}$ of a minimal $(C_1, C_2)$-graph $G$ is also minimal.
\end{lem}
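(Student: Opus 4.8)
The plan is to invoke Theorem~\ref{equivalent}: since minimality is equivalent to the absence of a $\phi$- or $\psi$-consistent cycle, it suffices to show that $G^\circ$ has no such cycle, knowing that $G$ (and hence, as I will check, $G_1$ and $G_2$) has none. I would argue that each of the three construction steps preserves the no-consistent-cycle property, proving the contrapositive at each step: a consistent cycle in the later graph pulls back to a consistent cycle in the earlier one. Chaining the three pull-backs from $G^\circ$ down through $G_2$ and $G_1$ to $G$ then contradicts the minimality of $G$.

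Steps~1 and 2 are routine. For Step~1, the suppressed vertices have degree $2$, so their traversal is forced and orientation-preserving; a consistent cycle in $G_1$ expands, by reinserting each relay, to a consistent cycle in $G$. For Step~2, the edge $(v_1,v_2)$ created from a crossing $v$ inherits a consistent $\phi$- and $\psi$-orientation (both paths run $v_1\to v_2$), and contracting $(v_1,v_2)$ carries a consistent cycle of $G_2$ to a closed walk of $G_1$ through $v$; the only subtlety is that a cycle using $v_1$ and $v_2$ but not the edge between them projects to a walk visiting $v$ twice, which I would split at $v$ and show that one of the two resulting simple cycles remains consistent.

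Step~3 is the crux, and I would handle it one inconsistent public edge $(u,v)$ at a time, inducting on the number of edges still to be fixed. The key observations are that the swap leaves every $\phi$-edge (public or private) and its $\phi$-direction unchanged, and that on the $\psi$-subgraph it acts as the transposition $\tau$ exchanging $u$ and $v$ — both of which are degree $2$ inside the $\psi$-subgraph — so $\tau$ is an isomorphism of $\psi$-subgraphs that fixes all terminals and respects every $\psi$-direction. Given a consistent cycle $O'$ in the post-swap graph, I would classify how $O'$ transits the gadget at $u,v$: transits that pass straight through the $\phi$-edges, or that meet only one of $u,v$, use only edges common to both graphs and pull back verbatim, while transits that enter or leave through the swapped private $\psi$-edges $(w_3,u),(v,w_4)$ get rerouted locally (for instance $w_1\to u\to v\to w_4$ becomes $w_1\to u\to w_4$ in the pre-swap graph, which is still $\phi$-consistent).

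The hard part will be the single transit that follows the rerouted $\psi$-path across the gadget, $w_3\to u\to v\to w_4$, because it crosses the public edge $(u,v)$ in its $\phi$-forward (and, after the swap, simultaneously $\psi$-forward) direction; its obvious pre-swap image $w_3\to v\to u\to w_4$ traverses $(u,v)$ in the $\psi$-forward but $\phi$-backward direction and so need not stay $\phi$-consistent. My intended resolution is to notice that along this transit $O'$ is in fact locally $\psi$-consistent, and to pull the entire cycle back through the $\psi$-subgraph isomorphism $\tau$, landing a $\psi$-consistent cycle in the pre-swap graph; making this switch between the two consistency types rigorous across the public edge — and checking that it interacts correctly with any private $\phi$-edges the cycle happens to use at the gadget — is the step I expect to require the most care.
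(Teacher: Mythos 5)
Your overall strategy---transferring criterion (iii) of Theorem~\ref{equivalent} backwards through the three construction steps---is a genuinely different route from the paper's, and your handling of Steps 1 and 2 is fine. But the resolution you propose for the crux of Step 3 does not work, and the whole difficulty of the lemma sits exactly there. Two problems. First, the transposition $\tau$ is an isomorphism only of the $\psi$-subgraphs; a $\phi$-consistent cycle $O'$ is not contained in the $\psi$-subgraph (away from the gadget it will typically use private $\phi$-edges), so ``pulling the entire cycle back through $\tau$'' is not a defined operation on $O'$. Second, and fatally, the switch of consistency type is not available globally: $\phi$-consistency of $O'$ constrains only those edges lying on $\phi$-paths, so $O'$ may perfectly well traverse private $\psi$-edges \emph{elsewhere} on the cycle against their $\psi$-direction. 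Consequently the pre-swap image of $O'$ (with the transit $w_3\to u\to v\to w_4$ replaced by $w_3\to v\to u\to w_4$) is in general consistent in neither sense: it violates $\phi$-consistency at the public edge $(u,v)$, now traversed $v\to u$, and it violates $\psi$-consistency at every backward private $\psi$-edge on the rest of the cycle. What you are left with pre-swap is only a ``mixed'' closed walk whose $\phi$-edges are all forward except for that single public edge, and such mixed walks are not excluded by Theorem~\ref{equivalent}(iii), so no contradiction with the minimality of $G_2$ follows. The induction for Step 3 therefore stalls precisely at the case you flagged.

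The missing idea is to transfer statement (ii) of Theorem~\ref{equivalent} instead of statement (iii): reroutability localizes non-minimality at a single \emph{private} edge, and private edges---unlike consistent cycles---do pass back through Steps 2 and 3 essentially unharmed, since those steps only reorganize the public-edge structure (Step 2 creates a public edge; Step 3 merely reattaches the two adjacent private $\psi$-edges, giving an obvious bijection of private edges). This is how the paper argues: $G$ minimal forces $G$, hence $G_1$, to be non-reroutable and thus minimal; if $G^\circ$ were non-minimal it would be reroutable, and Remark~\ref{ifreroutable} would supply a private edge $e$ with $G^\circ\setminus\{e\}\in\mathcal{N}(C_1,C_2)$; the rerouted path families in $G^\circ\setminus\{e\}$ map back (by contracting the Step-2 public edges and undoing the Step-3 rewiring) to show $G_1\setminus\{e\}\in\mathcal{N}(C_1,C_2)$, contradicting the minimality of $G_1$. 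If you insist on the cycle-pullback framing, you will in effect have to prove this reroutability transfer anyway in order to dispose of the $w_3\to u\to v\to w_4$ transit, so you should argue at the level of path families from the start.
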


\begin{proof}
First, since $G$ is minimal, by Theorem~\ref{equivalent}, $G$ is non-reroutable. Evidently, $G_1$ is also non-reroutable, and thus minimal. By way of contradiction, suppose that $G^\circ$ is not minimal. Again, by Theorem~\ref{equivalent}, $G^\circ$ is reroutable. Notice that in Step 2 and 3, both crossings or inconsistently oriented public edges are converted into consistently oriented public edges. Then, by Remark~\ref{ifreroutable}, one can find a private edge $e$ such that $G^\circ\backslash\{e\}\in \mathcal{N}(C_1,C_2)$, implying $G_1\backslash\{e\}$ also belongs to $\mathcal{N}(C_1,C_2)$, which contradicts the minimality of $G_1$.
\end{proof}

Let $\mathcal{N}^\circ(C_1, C_2)$ denote the subset of $\mathcal{N}(C_1,C_2)$ consisting of all networks $G$ such that $G$ is minimal, naturally orientable and all the non-terminal vertices in $G$ are of degree 3. Apparently, $\mathcal{N}^\circ(C_1,C_2)$ is the set of the representations of all minimal $(C_1,C_2)$-graphs. The following theorem says that in order to compute $\mathscr{H}(C_1, C_2)$, it is enough to only consider all graphs in $\mathcal{N}^\circ(C_1, C_2)$.
\begin{thm}
$$
\mathscr{H}(C_1, C_2) = \sup_{G \in \mathcal{N}^\circ(C_1, C_2)} \mathcal{H}(G).
$$
\end{thm}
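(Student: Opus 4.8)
The plan is to prove the equality by establishing the two inequalities separately, after invoking the reduction from the previous discussion that it suffices to take the supremum defining $\mathscr{H}(C_1,C_2)$ over all \emph{minimal} $(C_1,C_2)$-graphs; that is, $\mathscr{H}(C_1,C_2)=\sup\{\mathcal{H}(G): G \text{ a minimal } (C_1,C_2)\text{-graph}\}$. With this reduction, the whole theorem becomes a comparison between minimal $(C_1,C_2)$-graphs and their representations, mediated by the three-step construction of $G^\circ$.

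For the inequality $\sup_{G\in\mathcal{N}^\circ(C_1,C_2)}\mathcal{H}(G)\le \mathscr{H}(C_1,C_2)$, I would simply observe that $\mathcal{N}^\circ(C_1,C_2)$ is, by definition, a subfamily of the minimal $(C_1,C_2)$-graphs: every $G\in\mathcal{N}^\circ(C_1,C_2)$ is minimal, and minimality forces each edge to lie on some $\phi$- or $\psi$-path, so $G$ is a $(C_1,C_2)$-graph. Hence the supremum over this smaller family cannot exceed the supremum over all minimal $(C_1,C_2)$-graphs, which is $\mathscr{H}(C_1,C_2)$.

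For the reverse inequality it is enough to show that every minimal $(C_1,C_2)$-graph $G$ has a representation $G^\circ\in\mathcal{N}^\circ(C_1,C_2)$ satisfying $\mathcal{H}(G)\le\mathcal{H}(G^\circ)$; taking the supremum over $G$ then yields $\mathscr{H}(C_1,C_2)\le\sup_{\mathcal{N}^\circ}\mathcal{H}$. That $G^\circ\in\mathcal{N}^\circ(C_1,C_2)$ is already in hand: $G^\circ$ is naturally orientable and has all non-terminal vertices of degree $3$ by construction, and it is minimal by Lemma~\ref{minimalrepresentation}. It thus remains to track the hub count through the three steps. Step~1 deletes only non-terminal degree-$2$ vertices and rewires their two neighbours, leaving the degree of every other vertex unchanged and destroying no hub, so $\mathcal{H}(G_1)=\mathcal{H}(G)$. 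Step~3 merely reroutes paths via a local swap of four incident edges; a direct check shows the degrees of all four vertices involved are preserved, so $\mathcal{H}(G^\circ)=\mathcal{H}(G_2)$. In Step~2 each crossing is a single non-terminal degree-$4$ vertex (one hub) that is replaced by the two degree-$3$ vertices $v_1,v_2$ (two hubs), while no existing hub is removed, giving $\mathcal{H}(G_2)\ge\mathcal{H}(G_1)$. Chaining these yields $\mathcal{H}(G)=\mathcal{H}(G_1)\le\mathcal{H}(G_2)=\mathcal{H}(G^\circ)$, as required.

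The degree bookkeeping in Steps~1 and 3 is routine; the point that needs care is Step~2, where one must verify that after relays are removed every surviving non-terminal vertex of $G_1$ is either a degree-$3$ vertex (with exactly one public edge incident) or a degree-$4$ crossing. This rests on the vertex-disjointness of the $\phi$-paths and, separately, of the $\psi$-paths, which forces a non-terminal vertex to meet at most one $\phi$-path and one $\psi$-path and hence to have degree at most $4$ once relays are gone. Confirming this dichotomy guarantees that stretching all crossings leaves the graph with every non-terminal vertex of degree $3$ and can only increase the hub count, which is exactly what the reverse inequality needs.
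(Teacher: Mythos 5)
Your proposal is correct and takes essentially the same route as the paper: the paper also obtains the ``$\le$'' direction from the chain $\mathcal{H}(G)=\mathcal{H}(G_1)\le\mathcal{H}(G_2)=\mathcal{H}(G^\circ)$ through the three-step construction (with $G^\circ\in\mathcal{N}^\circ(C_1,C_2)$ guaranteed by Lemma~\ref{minimalrepresentation}), and the ``$\ge$'' direction from the fact that every graph in $\mathcal{N}^\circ(C_1,C_2)$ is minimal. Your added verification of the degree-$3$/degree-$4$ dichotomy after Step~1 merely makes explicit bookkeeping that the paper leaves implicit.
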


\begin{proof}
The ``$\leq$'' direction follows from the observation that
$$
\mathcal{H}(G)=\mathcal{H}(G_1)\le \mathcal{H}(G_2)=\mathcal{H}(G^\circ),
$$
and the ``$\ge$'' direction immediately follows from Lemma~\ref{minimalrepresentation}.
\end{proof}

A path in $G^\circ \in \mathcal{N}^\circ(C_1, C_2)$ is said to be an {\it alternating} path if all its edges are privates edges and the terminal pair of this path is one of the following: $(S_1, S_2)$, $(S_1, R_1)$, $(R_2, S_2)$ or $(R_2, R_1)$.

\begin{lem}
An alternating path has the following properties:
\begin{enumerate}
\item Each of its $\phi$-edges is only adjacent to $\psi$-edges, and each of its $\psi$-edges is only adjacent to $\phi$-edges.
\item Each of its $\phi$ (resp. $\psi$)-edge belongs to a different $\phi$ (resp. $\psi$)-path.
\end{enumerate}
\end{lem}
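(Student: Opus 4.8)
The plan is to establish a single local structural fact about $G^\circ$ and then read off both properties from it. Since $G^\circ \in \mathcal{N}^\circ(C_1,C_2)$, every non-terminal vertex $v$ has degree $3$, and I would first show that the three edges at $v$ consist of exactly one public edge, one private $\phi$-edge, and one private $\psi$-edge. Because the $\phi$-paths are mutually vertex-disjoint and $v$ is non-terminal, at most one $\phi$-path passes through $v$, and if one does it uses exactly two of the edges at $v$; the same holds for $\psi$. If no $\phi$-path met $v$, all three edges would be $\psi$-edges, which is impossible since a single $\psi$-path through a non-terminal vertex uses only two edges and the $\psi$-paths are vertex-disjoint. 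Hence exactly one $\phi$-path and, symmetrically, exactly one $\psi$-path pass through $v$. These contribute four path-incidences among only three edges, so precisely one edge is shared; since Step~2 of the representation has removed all crossings, the shared edge is a genuine public edge, and the remaining two edges are the private $\phi$-edge and the private $\psi$-edge.

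Property~1 then follows at once. Let $v$ be an internal vertex of an alternating path $A$. All edges of $A$ are private, so the two $A$-edges meeting at $v$ lie among the private edges at $v$; by the structural fact there are exactly two such edges, the private $\phi$-edge and the private $\psi$-edge, while the public edge is forced off $A$. Thus the two consecutive $A$-edges at every internal vertex are one $\phi$-edge and one $\psi$-edge, so the edge-types strictly alternate along $A$, which is Property~1.

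For Property~2 I would use a consistent-cycle argument. First I record a direction pattern for $A$ in $\overrightarrow{G}^\circ$: at an internal vertex $v$ the common public edge is oriented either into $v$, in which case both $A$-edges leave $v$ (a ``split'' vertex), or out of $v$, in which case both $A$-edges enter $v$ (a ``merge'' vertex). For any edge of $A$ joining two internal vertices, its tail must then be a split vertex and its head a merge vertex, so split and merge vertices alternate along $A$; combined with the alternation of Property~1 this forces all $\phi$-edges of $A$ to point the same way along $A$ (and all $\psi$-edges the opposite way). Now suppose, for contradiction, that two $\phi$-edges $e,e'$ of $A$ belong to a common $\phi$-path $\phi_i$, chosen as close together on $A$ as possible. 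Splicing the subpath of $A$ between $e$ and $e'$ with the subpath of $\phi_i$ between the corresponding vertices yields a cycle $O$. Using the direction pattern together with natural orientability of the public edges, I would check that, according to the relative order of $e,e'$ along $\phi_i$ versus along $A$, the cycle $O$ can be oriented so that either all of its $\phi$-edges or all of its $\psi$-edges agree with the traversal; that is, $O$ is a $\phi$- or $\psi$-consistent cycle. As $G^\circ$ is minimal, this contradicts the equivalence (i)$\Leftrightarrow$(iii) of Theorem~\ref{equivalent}, and the case of two $\psi$-edges sharing a $\psi$-path is handled symmetrically.

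I expect the main obstacle to be ensuring that $O$ is a genuine simple cycle: the $A$-subpath and the $\phi_i$-subpath might share interior vertices, in which case $O$ is only a closed walk. I would address this by exploiting the ``as close as possible'' choice of $e,e'$, arguing that any interior intersection would already produce a shorter $\phi$- or $\psi$-consistent cycle, and by carefully tracking the forward/backward orientation of the $\phi$-edges and public edges so that exactly the correct one of the two consistency types is certified in each configuration.
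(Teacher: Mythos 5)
Your proposal is correct and follows essentially the same route as the paper: Property 1 from the local structure at a degree-$3$ hub of $G^\circ$ (exactly one public edge, one private $\phi$-edge and one private $\psi$-edge, which is what the paper's terse appeal to Step 1 amounts to), and Property 2 by splicing the alternating-path segment with the $\phi$-path segment and certifying, according to the relative order of the two edges on the $\phi$-path, that the resulting cycle is $\phi$-consistent or $\psi$-consistent, contradicting minimality via Theorem~\ref{equivalent}. Your concern about $O$ being only a closed walk is not actually an obstacle in the paper's framework, since its definition of a (consistent) cycle is just a closed edge sequence with no simplicity requirement, so your closest-pair minimization is a harmless refinement rather than a needed repair.
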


\begin{proof}
\textbf{1.} This follows from the fact that after Step $1$, vertices with degree $2$ have been removed and thus no two private $\phi$ (or $\psi$)-edges are adjacent.

\textbf{2.} We show that in any alternating path, each $\phi$-edge belongs to a different $\phi$-path. Suppose, by contradiction, that for an alternating path path $e_1 \circ e_2 \circ \cdots \circ e_d$, where $e_i=(u_i,v_i)$, two edges $e_k, e_l$, $k < l$, both belong to $\phi_t$. If $e_l$ is smaller than $e_k$ on path $\phi_t$ in $\overrightarrow{G}^\circ$, then
$$
\phi_t[u_l,v_k]\circ e_{k+1} \circ e_{k+2} \circ \cdots \circ  e_{l-1}
$$
is a $\phi$-consistent cycle in $G^\circ$, which, by Theorem~\ref{equivalent}, gives us a contradiction. If $e_k$ is smaller than $e_l$ on path $\phi_t$ in $\overrightarrow{G}^\circ$, then the subpath $\phi_t[v_k,u_l]$ in fact can be expressed as
$f_1\circ f_2 \circ \cdots \circ f_{2p-1}$, where $f_1, f_3, \ldots, f_{2p-1}$ are public and $f_2, f_4,\ldots, f_{2p-2}$ are private. Then
$$(e_{k+1}\circ f_1) \circ f_2 \circ \cdots \circ f_{2p-2}\circ (f_{2p-1}\circ e_{l-1})\circ e_{l-2}\circ\cdots\circ e_{k+2}$$
is a $\psi$-consistent cycle in $G^\circ$, which, by Theorem~\ref{equivalent}, gives us a contradiction.
With a parallel argument, we conclude that each private $\psi$-edge belongs to a different $\psi$-path.
\end{proof}

We next present the structural decomposition theorem of a representation $G^{\circ} \in \mathcal{N}^\circ(C_1, C_2)$, which, roughly speaking,  states that after deleting public edges in $G^{\circ}$, each connected component in the resulting graph is an alternating path. More precisely, letting $G^{\circ}_{\textrm{p}}$ denote the subgraph of $G^\circ$ induced on all private edges in $G^{\circ}$, we have
\begin{thm}\label{decomposition}
$G^{\circ}_{\textrm{p}}$ consists of $(C_1+C_2)$ alternating paths.
\end{thm}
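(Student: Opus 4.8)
The plan is to determine the local structure of $G^{\circ}$ at a single vertex and then assemble the global picture of $G^{\circ}_{\textrm{p}}$ from it. First I would fix an arbitrary non-terminal vertex $v$ and count incidences by type. Since the $\phi$-paths are pairwise vertex-disjoint and $v$ is not a terminal, at most one $\phi$-path runs through $v$, contributing either $0$ or $2$ incident $\phi$-edges; the same holds for $\psi$. Writing $a$, $b$, $c$ for the numbers of private $\phi$-, private $\psi$-, and public edges at $v$, the $\phi$-edge count $a+c$ and the $\psi$-edge count $b+c$ each lie in $\{0,2\}$, while $a+b+c=d(v)=3$; a short case-check forces $a=b=c=1$. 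Hence every non-terminal vertex carries exactly one public, one private $\phi$- and one private $\psi$-edge, so it has degree $2$ in $G^{\circ}_{\textrm{p}}$, with its two private edges of opposite class. At the terminals, no $\psi$-path can pass through $S_1$ or $R_1$ (each has incident edges of only one orientation and is distinct from $S_2,R_2$), so every edge there is a private $\phi$-edge; thus $S_1,R_1$ have degree $C_1$ and, symmetrically, $S_2,R_2$ have degree $C_2$ in $G^{\circ}_{\textrm{p}}$.

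Next I would refine this using natural orientability. At a non-terminal $v$, the unique public edge $P$ is traversed in a common direction by the $\phi$- and $\psi$-paths through $v$; if $P$ points into $v$ it is the incoming edge of both paths, forcing the two private edges to point out of $v$, and if $P$ points out of $v$ both private edges point into $v$. Either way the two $G^{\circ}_{\textrm{p}}$-edges at $v$ agree in direction, so in $\overrightarrow{G}^{\circ}$ every vertex of $G^{\circ}_{\textrm{p}}$ is a pure source or a pure sink ($S_1,S_2$ sources, $R_1,R_2$ sinks). Consequently, along any path in $G^{\circ}_{\textrm{p}}$ both the edge class ($\phi$ or $\psi$) and the sign recording agreement between natural orientation and traversal direction flip at every interior vertex.

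I would then rule out cycles. As all interior vertices have degree $2$, any connected subgraph of $G^{\circ}_{\textrm{p}}$ missing the terminals would be a cycle; such a cycle alternates private $\phi$- and private $\psi$-edges, and by the source/sink property it can be traversed so that its edges alternate between agreeing and disagreeing with their natural orientation, the agreeing edges all being of one class. If that class is $\phi$ the cycle is $\phi$-consistent, otherwise $\psi$-consistent; in both cases Theorem~\ref{equivalent} contradicts the minimality of $G^{\circ}$. Hence $G^{\circ}_{\textrm{p}}$ is acyclic.

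Finally, since all interior vertices have degree $2$ and there are no cycles, the edges of $G^{\circ}_{\textrm{p}}$ decompose into maximal paths terminating only at terminals; counting endpoint incidences gives $\tfrac12(C_1+C_1+C_2+C_2)=C_1+C_2$ such paths, each consisting of private edges. The remaining, and I expect hardest, step is to show that each path joins an admissible terminal pair. For this I would attach to each terminal $T$ the sign $\rho(T)$ equal to the product of ``the incident private edge points out of $T$'' and ``that edge is a $\phi$-edge'', so that $\rho=+1$ at $S_1,R_2$ and $\rho=-1$ at $R_1,S_2$. Because class and orientation-sign both alternate along a path, their product is constant in the interior but, owing to the reversal of the traversal direction at the far end, takes opposite values at the two endpoints; thus the endpoints of each path carry opposite $\rho$. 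The pairs with opposite $\rho$ are exactly $(S_1,R_1)$, $(S_1,S_2)$, $(R_2,S_2)$, $(R_2,R_1)$ --- precisely the admissible pairs --- so each of the $C_1+C_2$ paths is an alternating path, as required.
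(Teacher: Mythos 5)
Your proof is correct, and it reaches the theorem by a somewhat different organization than the paper's. The paper's proof is a single constructive traversal: starting from each outgoing edge at $S_1$ (and each incoming edge at $R_2$), it walks alternately along a $\phi$-edge and against a $\psi$-edge, observes that the walk can never close up (the resulting cycle would be $\phi$-consistent in one orientation and $\psi$-consistent in the other, contradicting Theorem~\ref{equivalent} via Lemma~\ref{minimalrepresentation}), and so must terminate at $R_1$ or $S_2$; the admissible terminal pairs and the count $C_1+C_2$ fall out of the traversal directly, and Remark~\ref{head-tail} is extracted from this walk afterwards. You instead decompose the argument into three explicit pieces: a local edge census at each degree-$3$ hub (forcing exactly one public, one private $\phi$- and one private $\psi$-edge, hence degree $2$ in $G^{\circ}_{\textrm{p}}$ --- which the paper labels ``obvious'' and ``easily checked''), the pure source/sink property at interior vertices (your independent derivation of Remark~\ref{head-tail}, used by the paper but obtained there as a byproduct), and acyclicity via the same consistent-cycle contradiction. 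Where you genuinely diverge is the endpoint classification: rather than reading the terminal pair off a traversal, you introduce the conserved sign $\rho$ (class times orientation-agreement, which is constant along a path and flips intrinsically at the far endpoint) and check that opposite-$\rho$ pairs are exactly the four admissible ones; this parity bookkeeping also cleanly rules out degenerate paths returning to the same terminal, a case the paper's walk handles implicitly. The trade-off: the paper's traversal is shorter and self-assembling, while your version makes every assertion locally verifiable and turns the ``why these four terminal pairs'' question into a one-line invariant computation.
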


\begin{proof}
Consider $\overrightarrow{G}^{\circ}_{\textrm{p}}$, the naturally oriented version of $G^{\circ}_{\textrm{p}}$. Obviously, the degree of any non-terminal vertex in $\overrightarrow{G}^{\circ}_{\textrm{p}}$ is $2$. Now, starting from $S_1$, traverse along an outgoing $\phi$-edge, say, $e_1$, and then traverse against the $\psi$-edge adjacent to $e_1$, say, $e_2$, and then along a $\phi$-edge adjacent to $e_2$, say, $e_3$, and then  against a $\psi$-edge adjacent to $e_3$ \ldots. Continue the procedure in this fashion, we will always reach $R_1$ or $S_2$, since otherwise the set of edges that we have traversed will contain a cycle, which is both $\phi$-consistent and $\psi$-consistent. Evidently, a similar argument can be applied to the case when we start from an incoming edge incident with $R_2$. It then follows that one can find a set of $(C_1+C_2)$ vertex-disjoint paths in $G^{\circ}_{\textrm{p}}$ from $\{S_1,R_2\}$ to $\{S_2,R_1\}$; moreover, it can easily checked that each edge in $G^{\circ}_{\textrm{p}}$ belong to one of the above-mentioned vertex-disjoint paths.
\end{proof}

\begin{rem} \label{head-tail}
Let $v$ be a non-terminal vertex of an alternating path in $G^{\circ}_{\textrm{p}}$ and let $e, e'$ be the two edges incident with $v$. It then follows from the proof of Theorem~\ref{decomposition} that if $v$ is the head (resp. tail) of $e$ in $\overrightarrow{G}^{\circ}_{\textrm{p}}$, then it is also the head (resp. tail) of $e'$.
\end{rem}

An alternating path in $G^{\circ}_{\textrm{p}}$ is said to be an $S_1 S_2$, $S_1 R_1$, $R_2 S_2$, $R_2 R_1$-alternating path if its terminal pair is $(S_1, S_2)$, $(S_1, R_1)$, $(R_2, S_2)$ or $(R_2, R_1)$, respectively. A path is said to be an $S_1$-alternating path is it is either an $S_1 S_2$-alternating path or $S_1 R_1$-alternating path, similarly,  a path is also referred to as an $R_2$-alternating path is it is either an $R_2 S_2$-alternating path or $R_2 R_1$-alternating path. Apparently, there are $C_1$ $S_1$-alternating paths and $C_2$ $R_2$-alternating paths. For two vertices $u,v$ of an $S_1$ (resp. $R_2$)-alternating path $L$, we say $u$ is on the right of $v$ in $L$ if $v$ is ``nearer'' to $S_1$ (resp. $R_2$) than $u$ in $L$ (more precisely, the number of edges between $u$ and $S_1$ (resp. $R_2$) in $L$ is more than that between $v$ and $S_1$ (resp. $R_2$)) (the word ``right'' arises since we will ``position'' the vertices of a path in a plane in Section~\ref{xu-algorithm} for an easy illustration). For two edges $e,e'$ in $L$, we say $e$ is on the right of $e'$ if one of two vertices incident with $e$ is on the right of the two ones incident with $e'$ in $L$.

\begin{example}
Figure~\ref{illustrator} shows a naturally oriented minimal $(2,2)$-graph with $12$ hubs. From $S_1$ to $R_1$, there are two vertex-disjoint $\phi$-paths $e_1\circ e_3\circ e_7\circ e_9\circ e_{13}$ and $e_4\circ e_8\circ e_{10}\circ e_{14}\circ e_{16}$; and from $S_2$ to $R_2$, there are two vertex-disjoint $\psi$-paths $e_2\circ e_3\circ e_6\circ e_8\circ e_{12}$ and $e_5\circ e_9\circ e_{11}\circ e_{14}\circ e_{15}$. The edges $e_3$, $e_8$, $e_9$, $e_{14}$ are public since each of them is shared by some $\phi$-path and some $\psi$-path. The others are all private. Then we can find that $G^{\circ}_{\textrm{p}}$ consists of four alternating paths: two $S_1$-alternating paths
$$e_1\circ e_2,\ e_4\circ e_6\circ e_7\circ e_5,$$
and two $R_2$-alternating paths
$$e_{15}\circ e_{16},\ e_{12}\circ e_{10}\circ e_{11}\circ e_{13}.$$
Moreover, $e_4\circ e_6\circ e_7\circ e_5$ is an $S_1S_2$-alternating path, where edge $e_7$ is on the right of edge $e_6$, vertex $B$ is on the right of vertex $C$.
\end{example}

\smallskip
\section{A Path-Searching Algorithm} \label{xu-algorithm}

In this section, we introduce an algorithm, the analysis of which will be instrumental for deriving the value of $\mathscr{H}(C_1, C_2)$. Before rigorously describing the algorithm, we roughly illustrate its idea.

Consider a representation $G^{\circ}$ of a minimal $(C_1,C_2)$-graph $G$. Imagine on a two-dimensional plane, each alternating path $e_1\circ e_2\circ \cdots \circ e_d$ in $G^{\circ}_{\textrm{p}}$ is ``positioned'' into a ``double deck'' (see Figure~\ref{alternatingpath} for an example), where $e_{i+1}$ is on the right of $e_i$, and all the tails (in $\overrightarrow{G}_p^{\circ}$) are on the upper deck and heads on the lower deck (see Remark~\ref{head-tail}). A vertex on the upper deck is referred to as an \emph{upper vertex} if it is not a source, and a vertex on the lower deck is referred to as a $\emph{lower vertex}$ if it is not a sink. Notice that there is one more lower vertices than upper vertices in an $S_1S_2$-alternating path, while one less in an $R_2R_1$-alternating path, and there are equally many of them in an $S_1R_1$-alternating path or an $R_2S_2$-alternating path.

\begin{figure}[h]
\centering
\includegraphics[width=0.5\textwidth]{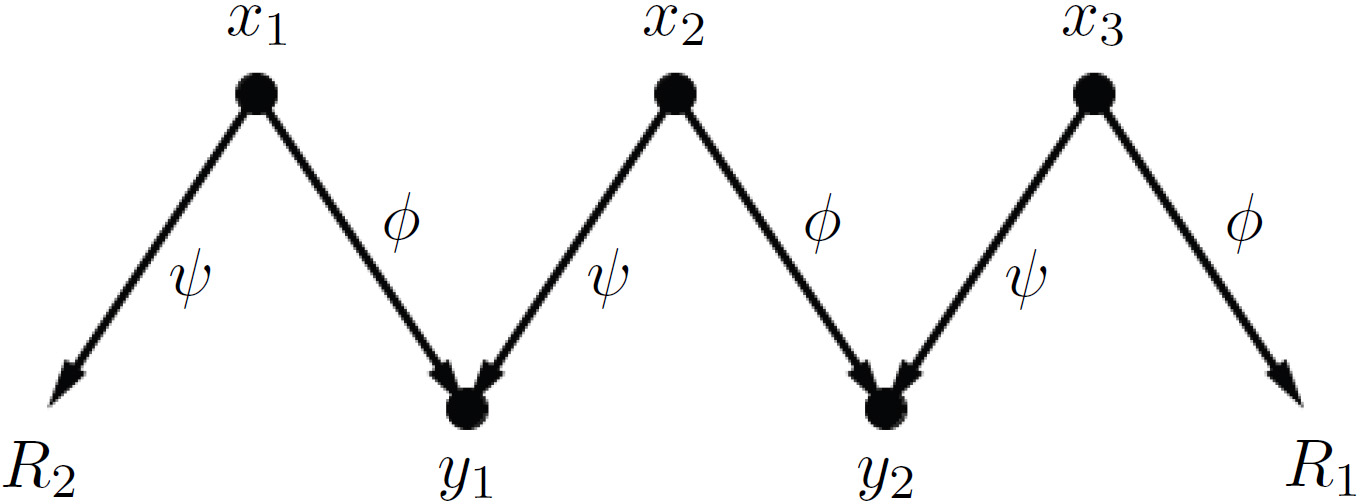}
\caption{In this $R_2R_1$-alternating path, $x_1$, $x_2$, $x_3$ are upper vertices and $y_1$, $y_2$ are lower vertices.}\label{alternatingpath}
\end{figure}

In each $S_1S_2$-alternating (resp. $R_2R_1$-alternating) path, one of the lower (resp. upper) vertices will be labeled as a $\emph{choke vertex}$ (which could be a ``bottleneck'' for the path extending procedure in the algorithm), which is initially the rightmost lower (resp. rightmost upper) vertex of the path and will be updated during the execution of  the algorithm.

As summarized below, the algorithm will iteratively find a set of the so-called \emph{interconnecting} paths in $G^{\circ}$, which will ``link'' the double decks.
\begin{itemize}
\item In the beginning of each iteration, we initialize the interconnecting path as a public edge, whose tail is an unoccupied lower vertex. See STEP 2.
\item We first traverse along edges in $\overrightarrow{G}^{\circ}$ to extend the interconnecting paths to link the double decks. When we reach a choke vertex, we check if it is possible to switch paths (see Figure~\ref{switching}) and find new ways to extend the interconnecting paths. See STEP 3(A), 3(B).
\item Then we traverse against edges in $\overrightarrow{G}^{\circ}$ to extend the interconnecting paths in a parallel manner. See STEP 5(A), 5(B).
\item At the end of $k$-th iteration, a set of $k$ interconnecting paths is found, whose vertices are all labeled as occupied. See STEP 6.
\end{itemize}
As proven later, the algorithm will produce all interconnecting paths and upon termination of the algorithm, all hubs in $G^{\circ}$ are occupied.

Let $\Delta$ be the number of $S_1S_2$-alternating paths, then $\Delta\le \min\{C_1,C_2\}$, and moreover, the numbers of $R_2R_1$-alternating paths, $S_1R_1$-alternating paths and $R_2S_2$-alternating paths are $\Delta, C_1-\Delta, C_2-\Delta$, respectively. The following algorithm will find $\Delta$ interconnecting paths in $G^{\circ}$.

\bigskip
\begin{algorithm}\label{algorithm}
Input: $G^{\circ}$; Output: a set $\mathcal{I}$ of $\Delta$ interconnecting paths.
\end{algorithm}

\begin{description}
\item[STEP 1: Initialize the algorithm.] \

Orient each edge in the direction of $\phi$ and $\psi$-paths;

Label all hubs as ``unoccupied'';

\textbf{FOR} each $S_1S_2$-alternating path \textbf{DO}

\qquad Its choke vertex $:=$ its rightmost lower vertex;

\textbf{FOR} each $R_2R_1$-alternating path \textbf{DO}

\qquad Its choke vertex $:=$ its rightmost upper vertex;

$\mathcal{I} := \emptyset$;

$n := 1$.

\item[STEP 2: Initialize the forward extension.] \

$v :=$ an arbitrarily picked unoccupied lower vertex and label $v$ as ``occupied'';

$f :=$ the public edge whose tail is $v$;

$u := h(f)$ and label $u$ as ``occupied'';

$P := f$.

\item[STEP 3(A): Prepend a private edge.] \

$L :=$ the alternating path to which $u$ belongs;

\textbf{IF} $L$ is an $R_2R_1$-alternating path \textbf{AND} $u$ is the choke vertex of $L$ \textbf{THEN} \textbf{BEGIN}

\qquad \textbf{IF} there are no unoccupied upper vertices in $L$ \textbf{THEN}

\qquad \qquad Go to \textbf{STEP 4};

\qquad \textbf{ELSE} \textbf{BEGIN}

\qquad \qquad $x_0:=$ the rightmost unoccupied upper vertex of $L$;

\qquad \qquad $(x_0,y_0)\circ (y_0,x_1)\circ \cdots \circ (x_d,y_d) \circ (y_d,u) :=$ the subpath of $L$ between $x_0$ and $u$;

\qquad \qquad \textbf{IF} $d=0$ \textbf{THEN} \textbf{BEGIN}

\qquad \qquad \qquad The choke vertex of $L := x_0$;

\qquad \qquad \qquad Label $y_0$ as ``occupied'';

\qquad \qquad \qquad $e:=(u,y_0)$;

\qquad \qquad \qquad $P:= P\circ e$;

\qquad \qquad \textbf{END}

\qquad \qquad \textbf{ELSE} (\emph{viz.} $d>0$) \textbf{BEGIN}

\qquad \qquad \qquad \textbf{FOR} $i:= 1$ \textbf{TO} $d$ \textbf{DO}

\qquad \qquad \qquad \qquad $P_i:=$ the interconnecting path in $\mathcal{I}$ containing $(x_i,y_i)$;

\qquad \qquad \qquad $\mathcal{I} := \mathcal{I} \setminus \{P_1, P_2, \ldots, P_d\}$;

\qquad \qquad \qquad The choke vertex of $L := x_0$;

\qquad \qquad \qquad Label $y_0$ as ``occupied'';

\qquad \qquad \qquad $e := (x_1,y_0)$;

\qquad \qquad \qquad $\widehat{P} := P$;

\qquad \qquad \qquad $P := P_{1}[t(P_{1}), x_1] \circ e$;

\qquad \qquad \qquad \textbf{FOR} $i:= 1$ \textbf{TO} $d-1$ \textbf{DO}

\qquad \qquad \qquad \qquad $P_{i}:= P_{{i+1}}[t(P_{{i+1}}), x_{i+1}] \circ (x_{i+1},y_i) \circ P_{i}[y_i,h(P_{i})]$;

\qquad \qquad \qquad $P_{d}:= \widehat{P}[t(\widehat{P}),u] \circ (u,y_d) \circ P_{d}[y_d,h(P_{d})]$;

\qquad \qquad \qquad $\mathcal{I} := \mathcal{I} \cup \{P_1, P_2, \ldots, P_d\}$;

\qquad \qquad \textbf{END}

\qquad \textbf{END}

\textbf{END}

\textbf{ELSE} \textbf{BEGIN}

\qquad \textbf{IF} $u\in$ an $S_1$-alternating path \textbf{THEN}

\qquad \qquad $e :=$ the private $\psi$-edge whose tail is $u$;

\qquad \textbf{ELSE} (\emph{viz.} $u \in$ an $R_2$-alternating path)

\qquad \qquad $e :=$ the private $\phi$-edge whose tail is $u$;

\qquad Label $h(e)$ as ``occupied'';

\qquad $P := P \circ e$.

\textbf{END}

\item[STEP 3(B): Prepend a public edge.] \

$f :=$ public edge whose tail is $h(e)$;

$u := h(f)$ and label $u$ as ``occupied'';

$P := P \circ f$;

Go to \textbf{STEP 3(A)}.

\item[STEP 4: Initialize the backward extension.]\

$\mathcal{I} := \mathcal{I} \cup \{P\}$;

$P :=$ the interconnecting path in $\mathcal{I}$ whose tail is $v$;

$\mathcal{I} := \mathcal{I} \setminus \{P\}$;

$w := v$.

\item[STEP 5(A): Append a private edge.] \

$L := $ the alternating path to which $w$ belongs;

\textbf{IF} $L$ is an $S_1S_2$-alternating path \textbf{AND} $w$ is the choke vertex of $L$ \textbf{THEN} \textbf{BEGIN}

\qquad \textbf{IF} there is no unoccupied lower vertex in $L$ \textbf{THEN}

\qquad \qquad Go to \textbf{STEP 6};

\qquad \textbf{ELSE} \textbf{BEGIN}

\qquad \qquad $y_0:=$ the rightmost unoccupied lower vertex of $L$;

\qquad \qquad $(y_0,x_0)\circ (x_0,y_1)\circ \cdots \circ (y_d,x_d)\circ (x_d,w)\hspace{-0.8mm}:=\hspace{-0.8mm}$ the subpath of $L$ between $y_0$ and $w$;

\qquad \qquad \textbf{IF} $d=0$ \textbf{THEN} \textbf{BEGIN}

\qquad \qquad \qquad The choke vertex of $L := y_0$;

\qquad \qquad \qquad Label $x_0$ as ``occupied'';

\qquad \qquad \qquad $e:=(x_0,w)$;

\qquad \qquad \qquad $P:= e \circ P$;

\qquad \qquad \textbf{END}

\qquad \qquad \textbf{ELSE} (\emph{viz.} $d>0$) \textbf{BEGIN}

\qquad \qquad \qquad \textbf{FOR} $i:= 1$ \textbf{TO} $d$ \textbf{DO}

\qquad \qquad \qquad \qquad $P_i :=$ the interconnecting path in $\mathcal{I}$ containing $(x_i,y_i)$;

\qquad \qquad \qquad $\mathcal{I} := \mathcal{I} \setminus \{P_1, P_2, \ldots, P_d\}$;

\qquad \qquad \qquad The choke vertex of $L := y_0$;

\qquad \qquad \qquad Label $x_0$ as ``occupied'';

\qquad \qquad \qquad $e :=(x_0,y_1)$;

\qquad \qquad \qquad $\widehat{P} := P$;

\qquad \qquad \qquad $P := e\circ P_{1}[y_1, h(P_{1})]$;

\qquad \qquad \qquad \textbf{FOR} $i:= 1$ \textbf{TO} $d-1$ \textbf{DO}

\qquad \qquad \qquad \qquad $P_{i}:= P_{i}[t(P_{i}),x_i] \circ (x_i,y_{i+1}) \circ P_{{i+1}}[y_{i+1}, h(P_{{i+1}})]$;

\qquad \qquad \qquad $P_{d} := P_{d}[t(P_{d}),x_d] \circ (x_d, w) \circ \widehat{P}[w, h(\widehat{P})]$;

\qquad \qquad \qquad $\mathcal{I} := \mathcal{I} \cup \{P_1, P_2, \ldots, P_d\}$;

\qquad \qquad \textbf{END}

\qquad \textbf{END}

\textbf{END}

\textbf{ELSE} \textbf{BEGIN}

\qquad \textbf{IF} $w \in$ an $R_2$-alternating path \textbf{THEN}

\qquad \qquad $e :=$ the private $\phi$-edge whose head is $w$;

\qquad \textbf{ELSE} (\emph{viz.} $w \in$ an $S_1$-alternating path)

\qquad \qquad $e :=$ the private $\psi$-edge whose head is $w$;

\qquad Label $t(e)$ as ``occupied'';

\qquad $P:= e\circ P$.

\textbf{END}

\item[STEP 5(B): Append a public edge.]\

$f :=$ public edge whose head is $t(e)$;

$w := t(f)$ and label $w$ as ``occupied'';

$P := f \circ P$;

Go to \textbf{STEP 5(A)}.

\item[STEP 6: Terminate the iteration.]\

\textbf{IF} $n=\Delta$ \textbf{THEN}

\qquad Terminate the algorithm;

$\mathcal{I} := \mathcal{I} \cup \{P\}$;

$n := n+1$;

Go to \textbf{STEP 2}.

\end{description}

\begin{figure}[h]
\centering
\includegraphics[width=0.53\textwidth]{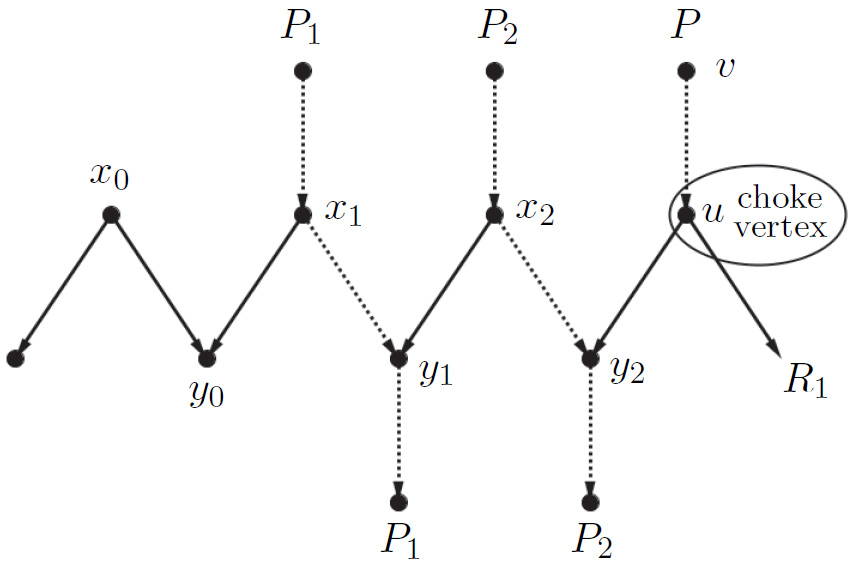}

before switching paths
\vskip 0.5cm

\includegraphics[width=0.53\textwidth]{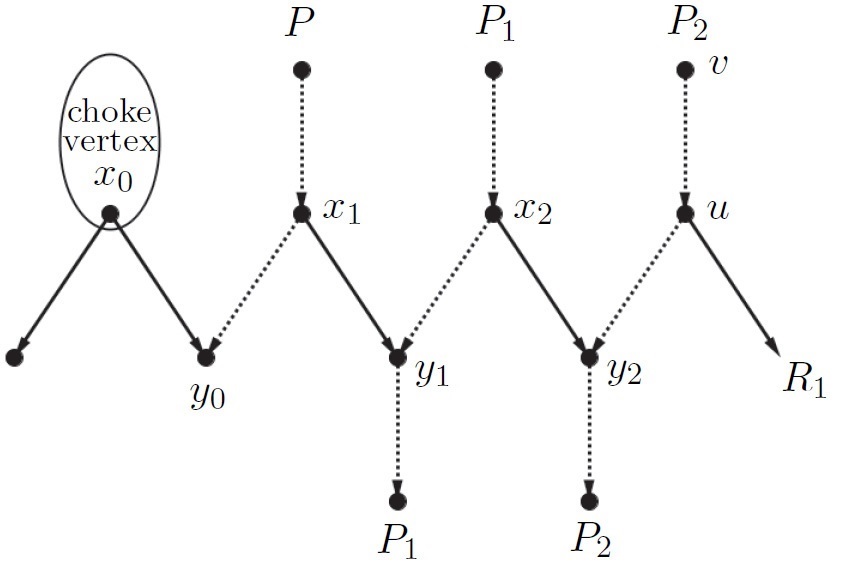}

after switching paths

\caption{Switch interconnecting paths when a choke vertex is met.}\label{switching}
\end{figure}

\begin{rem}
\textbf{1.} For Step 2 in each iteration, an unoccupied lower vertex always exists since there is at least one $S_1S_2$-alternating path with its chock vertex left unoccupied.

\textbf{2.} Each iteration of Algorithm~\ref{algorithm} consists of forward and backward extensions, and each interconnecting path starts from an $S_1S_2$-alternating path and ends at an $R_2 R_1$-alternating path. The algorithm cannot be simplified into a ``one-direction extension'' version: for each iteration, the extending procedure will terminate at an $S_1S_2$-alternating path with only one unoccupied lower vertex, but, in the beginning of each iteration, such an alternating path may not exist (to see this, notice that more than two unoccupied vertices of an alternating path can become ``occupied'', if we switch paths during one iteration).
\end{rem}

We will need the following two lemmas in the next section.

\begin{lem}\label{distinctAAsequence}
For a representation $G^{\circ}$ of a minimal $(C_1,C_2)$-graph, upon the termination of Algorithm~\ref{algorithm}, each private edge of any interconnecting path belongs to a different alternating path.
\end{lem}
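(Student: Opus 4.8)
The plan is to argue by contradiction and manufacture a $\phi$- or $\psi$-consistent cycle in $G^{\circ}$, which is forbidden by Theorem~\ref{equivalent} because $G^{\circ}$ is minimal (Lemma~\ref{minimalrepresentation}). Before that, I would establish the key preliminary fact that throughout Algorithm~\ref{algorithm} every interconnecting path—both those stored in $\mathcal{I}$ and the working path $P$—is a directed path in $\overrightarrow{G}^{\circ}$. This I would prove by induction on the steps: STEP 2 initializes $P$ as a single oriented public edge; the forward extensions in STEP 3(A)/3(B) append the outgoing private/public edges at the current head and the backward extensions in STEP 5(A)/5(B) prepend the incoming edges at the current tail, so each extension respects orientation; and for the path-switching reassignments (the $d>0$ branches) each newly created splice joins a segment arriving at an upper vertex $x_i$ along its incoming public edge to an outgoing private edge of $x_i$, and then to a segment leaving a lower vertex $y_i$ along its outgoing public edge, so orientation is preserved at every junction.

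With this in hand, suppose some interconnecting path $P$ in the final output uses two private edges of a single alternating path $L$; among all such repeats pick two that are \emph{consecutive} along the directed $P$, say $e=(a,b)$ before $e'=(a',b')$, where by the double-deck convention and Remark~\ref{head-tail} the vertices $a,a'$ are upper and $b,b'$ are lower vertices of $L$. Since $P$ is directed, it contains the directed subpath $P[b,a']$. I would next show that $P[b,a']$ meets $L$ only at its endpoints: a directed path enters and leaves any internal vertex $z$ using two of $z$'s three incident edges, and—since (as reflected in STEP 2) a lower vertex of $L$ is the tail of its public edge and the head of both its private edges, while an upper vertex is the head of its public edge and the tail of both its private edges—passing through such a $z\in L$ necessarily uses a private edge of $L$, contradicting consecutiveness. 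Hence $C:=P[b,a']\circ L[a',b]$ is a closed walk touching $L$ only along $L[a',b]$.

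Finally I would verify that $C$ is consistent. The subpath $L[a',b]$ runs from an upper to a lower vertex inside $L$, so its decks alternate and, by Remark~\ref{head-tail}, its edges are traversed forward, backward, forward, $\ldots$ beginning with a forward edge; since along an alternating path the $\phi$- and $\psi$-edges strictly alternate, all forward-traversed edges of $L[a',b]$ share one type $Y$ and all backward-traversed ones the other type $X$, with $\{X,Y\}=\{\phi,\psi\}$. As $L[a',b]$ contains no public edge (it lies in $G^{\circ}_{\textrm{p}}$) and $P[b,a']$ is directed, every $Y$-edge of $C$—including every public edge, which is both a $\phi$- and a $\psi$-edge—is traversed forward, so $C$ is a $Y$-consistent closed walk; decomposing it into simple cycles yields a $Y$-consistent cycle in $G^{\circ}$, the desired contradiction.

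The step I expect to be the main obstacle is the preliminary directedness claim, because the switching branches reshuffle pieces of several interconnecting paths simultaneously and one must check that each resulting path is still a single well-oriented path and that the indices $x_i,y_i$ align as the pseudocode intends; this is the bookkeeping-heavy part. Once directedness is secured the cycle construction and the forward/backward parity count are short, and it is worth noting that the argument never needs to track which of $\phi$- or $\psi$-edge the algorithm selected at a given step, nor the movement of the choke vertices.
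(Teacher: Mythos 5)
Your proposal is correct and takes essentially the same route as the paper's proof: assume two private edges of an interconnecting path lie on the same alternating path $L$, close a directed segment of the interconnecting path with the subpath of $L$ between the offending edges, and observe that the resulting cycle is $\phi$- or $\psi$-consistent (your type-$Y$ parity argument is exactly the paper's ``$e_{2l}$ on the right of $e_{2k}$'' versus ``on the left'' case split, phrased uniformly), contradicting Theorem~\ref{equivalent} via Lemma~\ref{minimalrepresentation}. The differences are only in explicitness: the paper silently treats interconnecting paths as directed paths in $\overrightarrow{G}^{\circ}$ and its cycles as closed walks, while you prove the directedness invariant through the path-switching steps and add the consecutive-repeat reduction to ensure simplicity --- sound refinements of the same argument, not a different method.
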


\begin{proof}
By contradiction, suppose that for an interconnecting path $P=e_0\circ e_1\circ e_2\circ \cdots \circ e_{2n}$, two private edges $e_{2k}$ and $e_{2l}$, $k<l$, belong to the same alternating path $L$. Let $O$ be the cycle $$P[h(e_{2k}),t(e_{2l})]\circ L[t(e_{2l}),h(e_{2k})].$$
If $e_{2l}$ is on the right of $e_{2k}$ in $L$, then $O$ is a $\psi$-consistent cycle in $G^{\circ}$. If $e_{2k}$ is on the right of $e_{2l}$ in $L$, then $O$ is a $\phi$-consistent cycle in $G^{\circ}$. By Theorem~\ref{equivalent}, both cases imply that $G^{\circ}$ is not minimal, which, by Lemma~\ref{minimalrepresentation}, further implies that $G$ is not minimal, a contradiction.
\end{proof}

\begin{lem}\label{atmostmconnectingpaths}
For a representation $G^{\circ}$ of a minimal $(C_1,C_2)$-graph $G$, upon the termination of Algorithm~\ref{algorithm}, $\mathcal{I}$ consists of at most $\min\{C_1,C_2\}$ interconnecting paths, and each hub in $G^{\circ}$ is in exactly one of the interconnecting paths in $\mathcal{I}$.
\end{lem}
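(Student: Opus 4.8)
The plan is to establish two facts about the output $\mathcal{I}$ of Algorithm~\ref{algorithm}: that $|\mathcal{I}| = \Delta \le \min\{C_1, C_2\}$, and that the interconnecting paths partition the hub set of $G^\circ$. The first bound is almost immediate: by the stated inequality $\Delta \le \min\{C_1, C_2\}$ (the number of $S_1S_2$-alternating paths cannot exceed either $C_1$ or $C_2$), and since STEP 6 terminates precisely when $n = \Delta$ with exactly one path added to $\mathcal{I}$ per completed iteration, we get $|\mathcal{I}| = \Delta$. So the substance of the lemma is the partition claim, which I would split into (a) the paths in $\mathcal{I}$ are pairwise vertex-disjoint and (b) every hub is covered.

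**For disjointness**, the key is to track how the ``occupied'' labels behave. First I would argue that once a vertex is labeled occupied it stays occupied for the rest of the algorithm — no step ever un-occupies a vertex. Every interconnecting path newly placed into $\mathcal{I}$ (at STEP 4 / STEP 6) consists only of vertices that were just labeled occupied during its construction in STEPs 2,3,5, and crucially STEP 2 always begins from an \emph{unoccupied} lower vertex and each extension step labels $h(e)$, $h(f)$, $t(e)$, or $t(f)$ occupied as it is appended. The delicate point is the path-switching in STEP 3(A) and STEP 5(A): here existing paths $P_1, \ldots, P_d$ are removed from $\mathcal{I}$, spliced together with the current path at the choke vertex, and reinserted. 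I would verify that this splicing is a pure \emph{rerouting} — the multiset of vertices in $\mathcal{I} \cup \{P\}$ is preserved across the switch, with the single newly-occupied vertex ($y_0$ in 3(A), $x_0$ in 5(A)) added. Since the switch only redistributes already-occupied vertices among paths and adds one fresh vertex, pairwise disjointness is maintained as an invariant. Establishing this invariant rigorously — that after each step the paths currently in $\mathcal{I}$ together with the path $P$ under construction are pairwise vertex-disjoint and their vertices are exactly the occupied ones — is what I would set up as a formal loop invariant and prove by induction on the steps. Lemma~\ref{distinctAAsequence} should assist here, since it guarantees each path visits any alternating path at most once per private edge, preventing self-intersection.

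**For coverage**, I would argue by the termination condition combined with a counting/degree argument. Upon termination, I claim no unoccupied hub remains. The forward extension (STEP 3) only halts at an $R_2R_1$-choke vertex with no unoccupied upper vertices, and the backward extension (STEP 5) only halts at an $S_1S_2$-choke vertex with no unoccupied lower vertices; the switching mechanism is designed exactly so that whenever a choke vertex is reached with unoccupied room, the algorithm reroutes and continues occupying. I would show that the choke-vertex updates monotonically sweep leftward along each $S_1S_2$- and $R_2R_1$-alternating path, so that by the time the final iteration terminates, every upper vertex of each $R_2R_1$-path and every lower vertex of each $S_1S_2$-path lying to the right of its (final) choke vertex has been occupied, and the structural decomposition (Theorem~\ref{decomposition}) forces the remaining vertices on the four types of alternating paths to be occupied through the private-edge traversals as well. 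The cleanest route is to count: the total number of hubs equals the total number of non-terminal vertices across the $C_1 + C_2$ alternating paths, and I would match this against the total length of the $\Delta$ interconnecting paths, using the asymmetry (one extra lower vertex per $S_1S_2$-path, one fewer upper vertex per $R_2R_1$-path, balanced counts on the mixed types) noted before the algorithm.

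**The hard part will be** the coverage argument, specifically proving that the greedy leftward sweep of choke vertices, together with the path-switching, genuinely occupies \emph{all} hubs and leaves none stranded. Disjointness is a bookkeeping invariant that, while tedious, follows mechanically from tracing the pseudocode; but coverage requires showing the algorithm does not terminate prematurely with an unoccupied hub hidden on some alternating path — this depends on the precise interaction between the choke-vertex advancement and which vertices the switching operation frees up, and is where I expect the real work (and most likely a separate supporting lemma about the monotone behavior of choke vertices) to lie.
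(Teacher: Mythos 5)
Your first half (the count $|\mathcal{I}|=\Delta\le\min\{C_1,C_2\}$) matches the paper, and your disjointness bookkeeping is consistent with what the paper leaves implicit in the occupied-label mechanics. The genuine gap is in the coverage claim --- which you yourself flag as the hard part but for which neither of your two proposed routes actually closes the argument. The counting route is circular: the number of hubs on the alternating paths is graph-dependent, and the only a priori control you have on the lengths of the interconnecting paths is an \emph{upper} bound (via Lemma~\ref{distinctAAsequence}, each interconnecting path meets each alternating path in at most one private edge, hence at most two hubs); coverage would require a matching \emph{lower} bound on how many hubs the $\Delta$ paths visit, and no such bound exists before coverage is known. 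Indeed, in the paper the counting runs in the opposite direction: coverage (this lemma) is an input to the hub count $2\Delta(C_1+C_2-\Delta)$ in Theorem~\ref{Nmn2mn}, not a consequence of it. The monotone choke-vertex sweep is similarly incomplete: choke vertices live only on $S_1S_2$- and $R_2R_1$-alternating paths, so the sweep says nothing about hubs on the $S_1R_1$- and $R_2S_2$-alternating paths, and your appeal to ``the private-edge traversals'' occupying the remaining vertices is precisely the statement to be proved.

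The paper's own coverage argument is a different and much shorter device, which your proposal never finds: suppose upon termination some hub is unoccupied; pick such an unoccupied lower (resp.\ upper) vertex and re-execute the extension procedure from STEP 3(B) (resp.\ STEP 3(A)). The only way an extension can halt is by arriving at the choke vertex of some $R_2R_1$-alternating path with no unoccupied upper vertices left, and after the $\Delta$ completed iterations these halting conditions can no longer be triggered by the restarted run; hence the traversal would extend indefinitely, which is impossible in a finite graph (any revisited vertex would close a $\phi$- or $\psi$-consistent cycle, contradicting minimality via Theorem~\ref{equivalent}). Without this restart-and-contradiction argument, or a genuinely worked-out substitute, your proposal establishes only the cardinality bound and (modulo the invariant you sketch) the ``at most one'' half; the ``each hub is in'' half remains unproved.
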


\begin{proof}
Each interconnecting path in $\mathcal{I}$ starts from a lower vertex of a distinct $S_1S_2$-alternating path and ends to an upper vertex of a distinct $R_2R_1$-alternating path. So the number of interconnecting paths is $\Delta$ with $\Delta\le \min\{C_1,C_2\}$. Now upon the termination of Algorithm~\ref{algorithm}, pick an unoccupied lower vertex, and then execute the algorithm from STEP 3(B); or pick an unoccupied upper vertex, and then execute the algorithm from STEP 3(A). Since all chock vertices have been unoccupied, the algorithm will fail to terminate, violating the fact that $G^\circ$ is finite.
\end{proof}

\section{$\mathscr{H}(C_1, C_2)$} \label{sectionhmn}

\begin{thm}\label{Nmn2mn}
$$
\mathscr{H}(C_1,C_2)= 2C_1C_2.
$$
\end{thm}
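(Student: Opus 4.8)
The plan is to prove the two inequalities $\mathscr{H}(C_1,C_2)\le 2C_1C_2$ and $\mathscr{H}(C_1,C_2)\ge 2C_1C_2$ separately, working throughout (by the theorem of Section~\ref{sectionminimalrepresentation}) only with representations $G^{\circ}\in\mathcal{N}^{\circ}(C_1,C_2)$. The first thing I would record is a clean bookkeeping identity. In $G^{\circ}$ every non-terminal vertex is a hub of degree $3$, and by Theorem~\ref{decomposition} each such hub is an interior vertex of exactly one alternating path, hence carries exactly two private edges; its third incident edge is therefore public. Since no terminal is incident with a public edge (a source has only outgoing edges, so no $\psi$-path can pass through $S_1$, and so on), every public edge joins two hubs, and counting public incidences gives $\mathcal{H}(G^{\circ})=2\cdot(\text{number of public edges of }G^{\circ})$. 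Thus the upper bound reduces entirely to showing that $G^{\circ}$ has at most $C_1C_2$ public edges.

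For the public-edge count I would exploit that each public edge lies on exactly one $\phi$-path and exactly one $\psi$-path (the $\phi$- and the $\psi$-paths being vertex-disjoint), so a public edge determines a pair $(\phi_i,\psi_j)$; the goal is to show this assignment is injective, i.e.\ that each source--sink pair shares \emph{at most one} public edge. To control this I would invoke the interconnecting-path machinery of Section~\ref{xu-algorithm}: by Lemma~\ref{atmostmconnectingpaths} the hubs of $G^{\circ}$ are partitioned among at most $\min\{C_1,C_2\}$ interconnecting paths (and, consequently, so are the public edges, each public edge joining two consecutive hubs of a single such path), while by Lemma~\ref{distinctAAsequence} the private edges of one interconnecting path lie on pairwise distinct alternating paths. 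Combining these with a \emph{no-consistent-cycle} argument in the spirit of Theorem~\ref{equivalent} -- if two public edges of one interconnecting path sat on a common $\phi$- or $\psi$-path, the intervening segments would close up into a $\phi$- or a $\psi$-consistent cycle, contradicting minimality through Lemma~\ref{minimalrepresentation} -- I would bound the number of public edges carried by each interconnecting path by $\max\{C_1,C_2\}$. Since the public edges are partitioned among at most $\min\{C_1,C_2\}$ such paths, their total number is at most $\min\{C_1,C_2\}\cdot\max\{C_1,C_2\}=C_1C_2$, whence $\mathcal{H}(G^{\circ})\le 2C_1C_2$.

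For the matching lower bound I would exhibit a single extremal family generalizing the graph of Figure~\ref{illustrator}: arrange $C_1$ horizontal ``$\phi$-lines'' and $C_2$ vertical ``$\psi$-lines'' into a grid and, at each of the $C_1C_2$ crossings, install the reverse of the Step~2 gadget so that $\phi_i$ and $\psi_j$ genuinely share a single public edge there, created by two degree-$3$ hubs. Orienting all $\phi$-lines left-to-right and all $\psi$-lines bottom-to-top makes the graph naturally orientable, and the monotonicity of this orientation forbids any $\phi$- or $\psi$-consistent cycle, so by Theorem~\ref{equivalent} the graph is minimal; the $C_1$ horizontal lines are vertex-disjoint $S_1$--$R_1$ paths forming a cut, giving minimum vertex-cut exactly $C_1$, and symmetrically $C_2$ for the second pair. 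This graph lies in $\mathcal{N}^{\circ}(C_1,C_2)$ and has exactly $C_1C_2$ public edges, hence exactly $2C_1C_2$ hubs, proving $\mathscr{H}(C_1,C_2)\ge 2C_1C_2$.

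The main obstacle is the per-interconnecting-path estimate of $\max\{C_1,C_2\}$ public edges (equivalently, that no source--sink pair shares two public edges). The naive estimate from Lemma~\ref{distinctAAsequence} alone only supplies $C_1+C_2$ alternating paths and is too weak by a factor of two; the subtlety -- already visible in the examples, where an interconnecting path may repeat a $\phi$-path while exhausting all $\psi$-paths, or vice versa -- is that the public edges of one interconnecting path need \emph{not} lie on distinct paths of both colors, so the improvement from $C_1+C_2$ to $\max\{C_1,C_2\}$ must be extracted from the choke-vertex and path-switching mechanism of Algorithm~\ref{algorithm}, whose very purpose is to prevent an interconnecting path from crossing any single line twice in a manner that would manufacture a consistent cycle. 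A secondary, routine but not entirely trivial, obstacle is verifying for the construction that the minimum vertex-cut equals $C_i$ \emph{exactly}, rather than merely being at most $C_i$.
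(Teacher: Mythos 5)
Your reduction of the upper bound to counting public edges is sound---in $G^{\circ}$ every hub is an interior vertex of exactly one alternating path, hence carries two private edges and exactly one public edge, no public edge touches a terminal, and so $\mathcal{H}(G^{\circ})$ is twice the number of public edges---and your lower-bound grid is exactly the paper's construction with meeting vertices $\lambda_{i,j}$ and departing vertices $\mu_{i,j}$ (the cut verification you worry about is immediate: $S_i$ has out-degree $C_i$, and the $C_i$ disjoint paths give the matching lower bound). The fatal problem is that both statements you propose for the count ``number of public edges $\le C_1C_2$'' are false, so what you flag as ``the main obstacle'' is not a missing argument but a wrong intermediate claim. Injectivity fails: take eight hubs $a_1,a_2,b_1,b_2,c_1,c_2,d_1,d_2$, public edges $(a_1,a_2),(b_1,b_2),(c_1,c_2),(d_1,d_2)$, and paths $\phi_1=S_1\,a_1\,a_2\,b_1\,b_2\,d_1\,d_2\,R_1$, $\phi_2=S_1\,c_1\,c_2\,R_1$, $\psi_1=S_2\,a_1\,a_2\,c_1\,c_2\,d_1\,d_2\,R_2$, $\psi_2=S_2\,b_1\,b_2\,R_2$, with consecutive vertices joined by single edges. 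All non-terminal vertices have degree $3$, the orientation is natural, and a short check shows neither $\phi$ nor $\psi$ can be rerouted (the two out-edges of $S_1$ force $\phi_1$ through $a_1$ and $\phi_2$ through $c_1$, after which every step is forced; similarly for $\psi$), so by Theorem~\ref{equivalent} this is a minimal $(2,2)$-graph; yet the pair $(\phi_1,\psi_1)$ owns the two public edges $(a_1,a_2)$ and $(d_1,d_2)$ while $(\phi_2,\psi_2)$ owns none. So ``each source--sink pair shares at most one public edge'' is simply not a theorem, even though this example has exactly $2C_1C_2=8$ hubs.

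Your fallback, a uniform bound of $\max\{C_1,C_2\}$ public edges per interconnecting path, also fails: with hubs $p,u_1,y_1,u_2,y_2,u_3$, public edges $(p,u_1),(y_1,u_2),(y_2,u_3)$, and paths $\phi_1=S_1\,p\,u_1\,R_1$, $\phi_2=S_1\,y_1\,u_2\,y_2\,u_3\,R_1$, $\psi_1=S_2\,p\,u_1\,y_1\,u_2\,R_2$, $\psi_2=S_2\,y_2\,u_3\,R_2$, one obtains a minimal $(2,2)$-graph with $\Delta=1$ whose unique interconnecting path carries $3=C_1+C_2-\Delta>\max\{C_1,C_2\}$ public edges. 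This is precisely why the paper proves no per-pair or per-path bound: its count is aggregate and iteration-indexed. Using Lemmas~\ref{distinctAAsequence} and~\ref{atmostmconnectingpaths} it shows that the $R_2R_1$-alternating path at which the $t$-th forward extension stops has at most $2t-1$ hubs, so the $R_2R_1$-paths together contribute at most $\sum_{t=1}^{\Delta}(2t-1)=\Delta^2$ hubs, symmetrically $\Delta^2$ for the $S_1S_2$-paths, plus $2\Delta(C_1+C_2-2\Delta)$ for the mixed alternating paths, giving $\mathcal{H}(G^{\circ})\le 2\Delta(C_1+C_2-\Delta)\le 2C_1C_2$, where the last inequality is $(C_1-\Delta)(C_2-\Delta)\ge 0$. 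Since $C_1+C_2-\Delta\ge\max\{C_1,C_2\}$, individual interconnecting paths can indeed be long (the second example above attains this bound), and the factor-of-two saving you correctly identify as the crux emerges only from summing over the order in which the algorithm occupies choke vertices---early paths must be short---not from any invariant of a single interconnecting path or a single pair $(\phi_i,\psi_j)$. As written, your outline cannot be repaired without replacing both claims by such an aggregate argument, which is in substance the paper's proof.
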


\begin{proof}
\textbf{The ``$\le$'' direction:} For a representation $G^{\circ}$ of a minimal $(C_1,C_2)$-graph $G$, apply Algorithm~\ref{algorithm} to obtain a set $\mathcal{I}$ of $\Delta$ interconnecting paths. In the $t$-th iteration of the algorithm, the forward extension stops at the choke vertex of some $R_2R_1$-alternating path. Let $L_t$ denote this alternating path. Note that after $t$ iterations, 1) all hubs of $L_t$ become occupied; 2) by Lemma~\ref{distinctAAsequence}, each of the $t$ obtained interconnecting paths contains at most two hubs of $L_t$; 3) one obtained interconnecting path contains exactly one hub of $L_t$. Hence, we deduce that the number of hubs of $L_t$ is at most $2t-1$, and therefore the total number of hubs of all $R_2R_1$-alternating paths is at most $\sum_{t=1}^{\Delta} (2t-1)=\Delta^2$. Similarly, the total number of hubs of all $S_1S_2$-alternating paths is at most $\Delta^2$ as well. Again, by Lemma~\ref{distinctAAsequence}, the total number of hubs of all $S_1R_1$ and $R_2S_2$-alternating paths is at most $2\Delta(C_1+C_2-2\Delta)$. By Lemma~\ref{atmostmconnectingpaths}, each hub in $G^{\circ}$ belongs to some interconnecting path in $\mathcal{I}$. Therefore,
$$|\mathcal{H}(G^\circ)|\le 2\Delta^2+2\Delta(C_1+C_2-2\Delta)=2\Delta(C_1+C_2-\Delta)\le 2C_1C_2,$$
where the last inequality follows from $\Delta\le \min\{C_1,C_2\}$.

\textbf{The ``$\ge$'' direction:} We only need to construct a minimal $(C_1,C_2)$-graph $G$ with $2C_1C_2$ hubs (see Figure~\ref{picn33} for an example). The graph $G$ can be described as follows:  $G \in \mathcal{N}^\circ(C_1, C_2)$ is naturally oriented, and there is a set of $C_1$ vertex-disjoint paths $\phi=\{\phi_1,\phi_2,\ldots,\phi_{C_1}\}$ from $S_1$ to $R_1$ and a set of $C_2$ vertex-disjoint paths $\psi=\{\psi_1,\psi_2,\ldots,\psi_{C_2}\}$ from $S_2$ to $R_2$. Furthermore, in $\overrightarrow{G}$, the directed version of $G$, paths $\phi_i$ and $\psi_j$ meet at vertex $\lambda_{i,j}$ and depart at vertex $\mu_{i,j}$, for $1\le i\le C_1$, $1\le j\le C_2$, and
\begin{itemize}
\item on path $\phi_i$,
$\lambda_{i,1}<\mu_{i,1}<\lambda_{i,2}<\mu_{i,2}<\cdots<\lambda_{i,C_2}<\mu_{i,C_2};$
\item on path $\psi_j$,
$\lambda_{1,j}<\mu_{1,j}<\lambda_{2,j}<\mu_{2,j}<\cdots<\lambda_{C_1,j}<\mu_{C_1,j}.$
\end{itemize}
\end{proof}

\begin{figure}[h]
\centering
\includegraphics[width=0.4\textwidth]{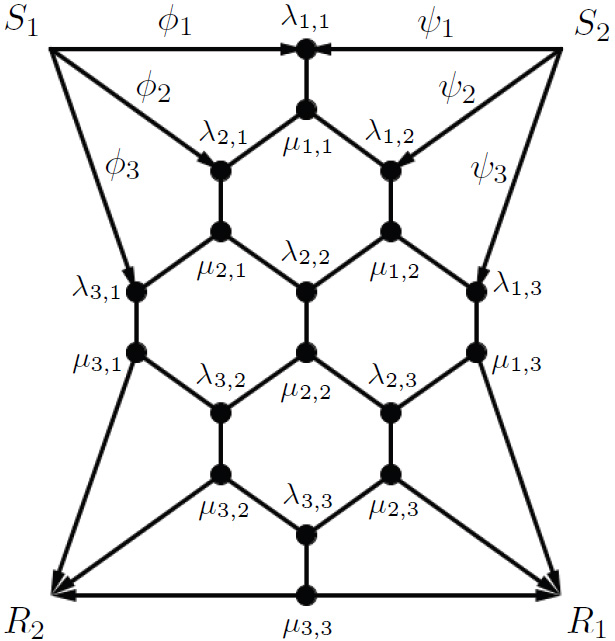}
\caption{A minimal $(3,3)$-graph with $18$ hubs.}\label{picn33}
\end{figure}

\section{$\mathscr{H}(C_1, C_2, \ldots, C_n)$} \label{more-than-two}

In this section, we are concerned with $\mathscr{H}$ with more than two parameters, which turns out to be much more difficult to compute than $\mathscr{H}$ with two parameters.

The following theorem establishes the finiteness of $\mathscr{H}$ with more than two parameters.
\begin{thm} \label{finiteness}
For any given $C_1, C_2, \ldots, C_n\in \mathbb{N}$,
$$
\mathscr{H}(C_1, C_2, \ldots, C_n) < \infty.
$$
\end{thm}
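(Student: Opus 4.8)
The plan is to prove finiteness by induction on the number of parameters $n$, using the established result $\mathscr{H}(C_1, C_2) = 2C_1C_2 < \infty$ (Theorem~\ref{Nmn2mn}) as the base case. The guiding idea is a recursive bounding argument: given a graph $G \in \mathcal{N}(C_1, \ldots, C_n)$, I want to ``absorb'' one of the source-sink pairs and relate the number of hubs to a quantity that is controlled by $\mathscr{H}$ with fewer parameters, so that each step of the recursion introduces only a bounded multiplicative or additive factor.

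First I would fix a minimal $G \in \mathcal{N}(C_1, \ldots, C_n)$ and, for each $i$, fix a set $\alpha_i$ of $C_i$ vertex-disjoint paths witnessing the cut constraint, so that $G = \bigcup_{i=1}^n G_{\alpha_i}$ is an $(C_1, \ldots, C_n)$-graph. The natural reduction is to single out the last pair $(S_n, R_n)$ and consider the subgraph $G' = \bigcup_{i=1}^{n-1} G_{\alpha_i}$ carrying the first $n-1$ flows; by the induction hypothesis, a suitable minimal subgraph of $G'$ has at most $\mathscr{H}(C_1, \ldots, C_{n-1}) < \infty$ hubs. The real content is to bound the extra hubs created by overlaying the $\alpha_n$-paths on top of this bounded-hub ``skeleton.'' Here I would exploit the same structural machinery developed for two parameters: after removing relays, stretching crossings, and matching directions (the Step 1--3 construction of Section~\ref{sectionminimalrepresentation}), every interaction between the $\alpha_n$-flow and the first $n-1$ flows occurs at hubs of degree $3$, and each $\alpha_n$-path can be decomposed into maximal private segments separated by public edges. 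The key combinatorial observation I would aim to establish is that the number of ``new'' hubs contributed by the $\alpha_n$-paths is controlled by how many times these $C_n$ paths can interleave with the already-bounded structure, which is itself a function only of $C_n$ and the hub-count of the skeleton. Intuitively, a pairwise argument treating $(\alpha_n, \alpha_i)$ for each $i < n$ via the two-parameter bound $2C_n C_i$ should yield a bound of the shape $\mathscr{H}(C_1, \ldots, C_{n-1}) + 2C_n\sum_{i<n} C_i$, or more crudely any explicit finite expression in $C_1, \ldots, C_n$.

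The main obstacle I anticipate is that, unlike the two-parameter case, the alternating-path decomposition (Theorem~\ref{decomposition}) and the clean ``double deck'' structure do not survive verbatim once three or more flows share edges: a single public edge may be shared by several of the $\alpha_i$, and a hub may lie on three distinct flows simultaneously, so the tidy bookkeeping of the path-searching algorithm (Algorithm~\ref{algorithm}) breaks down. Consequently I cannot expect a tight value, only finiteness, and the argument must avoid relying on the planar ``right/left'' positioning that was special to two flows. To circumvent this, I would argue that in a minimal graph each hub must be a genuine branch point of at least two of the flows (otherwise a relay or a reroutable configuration would exist, contradicting minimality via Theorem~\ref{equivalent}), so every hub is ``charged'' to some unordered pair $\{i,j\}$ of flow-indices. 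Summing the two-parameter bound over all $\binom{n}{2}$ pairs then gives a finite ceiling of the form $\sum_{1 \le i < j \le n} 2C_i C_j$, which would suffice for the theorem. Making the charging argument rigorous—specifically, showing that no hub escapes being attributed to such a pair and that each pair's contribution is genuinely bounded by its two-parameter quantity when the other flows are present—is the delicate step, since the presence of additional flows could in principle force extra branchings; I would handle this by restricting attention to the induced two-flow subgraph $G_{\alpha_i \alpha_j}$ at each hub and verifying that minimality of $G$ forces minimality (or at least the no-consistent-cycle property) of the relevant local configuration.
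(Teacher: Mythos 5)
Your skeleton (induction on $n$ with Theorem~\ref{Nmn2mn} as base case, singling out the last flow and bounding the ``new'' hubs it creates) matches the paper's strategy, but the mechanism you propose for the inductive step has a genuine gap. Your charging argument needs, for each pair $\{i,j\}$, that the number of hubs attributable to that pair is bounded by $\mathscr{H}(C_i,C_j)=2C_iC_j$, and you propose to get this by ``verifying that minimality of $G$ forces minimality (or at least the no-consistent-cycle property)'' of the induced two-flow subgraph $G_{\alpha_i\alpha_j}$. That verification fails: minimality of $G$ does \emph{not} imply that the pairwise subgraphs are non-reroutable, and the paper exhibits a direct counterexample (Figure~\ref{N222minimalreroutable}: a minimal $(2,2,2)$-graph in which $\xi$ is reroutable, i.e., the relevant pairwise subgraph contains a consistent cycle). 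The reason deletion arguments do not rescue it is that every edge of such a consistent cycle can be needed by a \emph{third} flow, so no edge is removable from $G$ even though the pair $(i,j)$ alone would permit rerouting. A non-minimal (reroutable) $(C_i,C_j)$-graph has no a priori hub bound --- that is the entire point of restricting the supremum defining $\mathscr{H}$ to minimal graphs --- so neither your bound $\mathscr{H}(C_1,\ldots,C_{n-1})+2C_n\sum_{i<n}C_i$ nor $\sum_{i<j}2C_iC_j$ follows from your argument. Nor can one simply reroute all $\binom{n}{2}$ pairs into minimality simultaneously, since rerouting $\alpha_i$ to fix the pair $(i,j)$ perturbs every other pair containing $i$.

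The paper circumvents exactly this obstruction with a more elaborate decomposition. It reroutes only the first $k-1$ flows to make $G_{\alpha_1\cdots\alpha_{k-1}}$ minimal (giving $N_1\le\mathscr{H}(C_1,\ldots,C_{k-1})$ hubs), then splits the new hubs created by $\alpha_k$ into \emph{local} ones (outside the subgraph $\widehat{G}$ of edges shared by \emph{all} of the first $k-1$ flows), which are bounded by the induction hypothesis applied to the mixed $(k-1)$-parameter quantities $\mathscr{H}(C_{j_1},\ldots,C_{j_{k-2}},C_k)$, and \emph{global} ones inside $\widehat{G}$. For the latter it contracts the components of $\widehat{G}$ (each a path, so $\omega\le N_1/2$ of them) and segments $\alpha_k$ at the local new hubs, building an auxiliary \emph{minimal} $(\omega,\ldots,\omega,N_3)$-graph $G'$ in which $\alpha'_k$ is provably non-reroutable; only there does it invoke the two-parameter theorem, and only for the pairs $(\alpha'_i,\alpha'_k)$ --- these reroutings are mutually independent precisely because each pairwise graph $G'_i$ involves the fixed $\alpha'_k$ together with its own $\alpha'_i$. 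This yields $\mathcal{H}(G)\le N_1+N_2+(k-1)N_1(C_k+N_2)$, a recursive bound far larger than your conjectured $\sum_{i<j}2C_iC_j$; whether anything close to your pairwise sum is actually true is not settled by the paper, but it is certainly not obtained by the route you sketch. To repair your proposal you would need to supply, in some form, the paper's treatment of the region where many flows overlap, rather than the local two-flow reduction.
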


\begin{proof}

We will use an inductive argument on $n$. Notice that the case when $n=2$ has been established in Theorem~\ref{Nmn2mn}. By way of induction, we assume that the theorem is true for $n \leq k-1$ and proceed to prove it for $n=k$.

Consider any minimal $(C_1, C_2, \ldots, C_k)$-graph $G$. Let $\alpha_i$ denote the set of $C_i$ vertex-disjoint paths from $S_i$ to $R_i$ for $1\le i\le k$. After necessary rerouting of $\alpha_1, \alpha_2, \ldots, \alpha_{k-1}$ within $G_{\alpha_1 \alpha_2 \cdots \alpha_{k-1}}$, we can assume that $G_{\alpha_1 \alpha_2 \cdots \alpha_{k-1}}$ is minimal and thus
$$
N_1\triangleq \mathcal{H}\left(G_{\alpha_1 \alpha_2 \cdots \alpha_{k-1}}\right) \leq \mathscr{H}(C_1, C_2, \ldots, C_{k-1}) < \infty.
$$

Let $\widehat{G}$ denote the subgraph of $G_{\alpha_1 \alpha_2 \cdots \alpha_{k-1}}$ induced on all the edges, each of which is simultaneously an $\alpha_1, \alpha_2, \ldots, \alpha_{k-1}$-edge, and let $\omega$ denote the number of connected components in $\widehat{G}$. Obviously, each connected component of $\widehat{G}$ is in fact a path, and therefore we have $\omega \le N_1/2$.

In this proof, we say a hub in $G$ is {\it new} if it is a hub in $G$, however, not one in  $G_{\alpha_1 \alpha_2 \cdots \alpha_{k-1}}$. And we say a new hub is \emph{global}, if this hub belongs to $\widehat{G}$, \emph{local}, if this hub is in $G\setminus \widehat{G}$.

Then, by the induction hypothesis, for any distinct $j_1, j_2, \ldots, j_{k-2} \in \{1, 2, \ldots, k-1\}$, we deduce that the number of new hubs in $G_{\alpha_{j_1}\alpha_{j_2} \cdots \alpha_{j_{k-2}} \alpha_k} \backslash \widehat{G}$ is upper bounded by $\mathscr{H}(C_{j_1}, C_{j_2}, \ldots, C_{j_{k-2}}, C_k)$. So the number of local new hubs is at most
$$N_2\triangleq \sum_{i=1}^{k-1} \mathscr{H}(C_1,\ldots,C_{i-1},C_{i+1},\ldots,C_{k-1},C_k)<\infty.$$

For each $\alpha_{k}$-path, we ``cut'' it at each of its local new hubs and then ``divide'' the path into ``segments'', each of which, evidently, is a subpath of the original $\alpha_k$-path. Let $\breve{\alpha}_k$ denote the set of all obtained subpaths. Then we have $N_3 \triangleq |\breve{\alpha}_k| \le C_k+N_2$.

From the subgraph $\widehat{G} \cup \breve{\alpha}_k$, we construct an $(\underbrace{\omega,\omega,\ldots,\omega}_{k-1},N_3)$-graph $G'$ through the following procedure:
\begin{enumerate}
\item Add sources $S'_1,S'_2,\ldots,S'_k$, sinks $R'_1,R'_2,\ldots,R'_k$.
\item For any $i \in \{1, 2, \ldots, k-1\}$ and for any connected component of $\widehat{G}$, whose natural $\alpha_i$-direction is from one end vertex, say $v_1$, to the other end vertex, say $v_2$, add two directed edges $(S'_i,v_1)$ and $(v_2,R'_i)$ (notice that a connected component may have opposite $\alpha_{i_1}$-direction and $\alpha_{i_2}$-direction for different $i_1$, $i_2$). Then, for any $1 \leq i \leq k-1$, we obtain a group $\alpha'_i$ of $\omega$ vertex-disjoint paths from $S'_i$ to $R'_i$.
\item For any $\breve{\alpha}_k$-path, whose natural $\alpha_k$-direction is from one end vertex, say $v_1$, to the other end vertex, say $v_2$, add two directed edges $(S'_k,v_1)$ and $(v_2,R'_k)$. Then, we obtain a group $\alpha'_k$ of $N_3$ vertex-disjoint paths from $S'_k$ to $R'_k$.
\end{enumerate}

Obviously, the number of global new hubs in $G$ is just $\mathcal{H}(G')$. It follows from the minimality of $G$ and the observation that for any $1 \leq i \leq k$, any $\alpha'_i$-consistent cycle in $G'$ naturally corresponds an $\alpha_i$-consistent cycle in $G$ that $G'$ is a minimal $(\underbrace{\omega,\omega,\ldots,\omega}_{k-1},N_3)$-graph. We then proceed to deduce that $\alpha'_k$ is non-reroutable in $G'$, since otherwise there exists an edge $e$ which is exclusively owned by $\alpha'_k$-paths (this follows from a parallel argument leading to Remark~\ref{ifreroutable}) such that $G' \setminus \{e\}\in \mathcal{N}(\underbrace{\omega,\omega,\ldots,\omega}_{k-1},N_3)$, violating the fact that $G'$ is minimal.

Now, after necessary rerouting of $\alpha'_i$ within $G'$, we assume that $G'_i$, the subgraph of $G'$ consisting of all $\alpha'_i$-paths and $\alpha'_k$-paths, is non-reroutable and thus, by Theorem~\ref{equivalent}, minimal. Notice that any hub $v$ in $G'$ must be a hub of some $G'_i$, since otherwise $G'$ contains an edge incident with $v$ that does not belong to any $G'_i$ and therefore is not minimal. Hence, by Theorem~\ref{Nmn2mn}, we obtain that
$$\mathcal{H}(G')\le \sum_{i=1}^{k-1}\mathcal{H}(G'_i)\le (k-1)\mathscr{H}(\omega,N_3)=2(k-1)\omega N_3\le (k-1)N_1(C_k+N_2).$$
Therefore,
$$\mathcal{H}(G)\le N_1+N_2+(k-1)N_1(C_k+N_2)<\infty.$$
The proof is then complete.
\end{proof}

\begin{thm} \label{thm11mn}
For any $n\ge 0$, $C_1, C_2\ge 1$, we have
$$\mathscr{H}(C_1,C_2,\underbrace{1,1,\ldots,1}_n)=2(C_1C_2+n).$$
\end{thm}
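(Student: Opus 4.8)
The plan is to prove the identity $\mathscr{H}(C_1,C_2,\underbrace{1,\ldots,1}_n)=2(C_1C_2+n)$ by establishing matching upper and lower bounds, building on the machinery already developed for the two-parameter case. For the lower bound, I would construct an explicit minimal graph in $\mathcal{N}^\circ(C_1,C_2,\underbrace{1,\ldots,1}_n)$ realizing $2(C_1C_2+n)$ hubs. The natural approach is to start from the extremal $(C_1,C_2)$-graph of the ``$\ge$'' direction of Theorem~\ref{Nmn2mn}, which already contributes $2C_1C_2$ hubs, and then attach, for each of the $n$ unit-capacity pairs, a single path that is routed so as to cross the existing structure in a way that forces two additional hubs apiece while preserving minimality (non-reroutability and absence of consistent cycles). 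The main task here is to verify that each added unit-capacity path contributes exactly two new hubs and that the resulting graph remains naturally orientable with minimum vertex-cuts exactly $C_1,C_2,1,\ldots,1$.

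For the upper bound, the guiding principle is that a single source--sink pair with $C_i=1$ contributes only one path, and so a single unit-capacity flow interacting with the rest of the network should be cheap. I would adapt the recursive/counting argument in the proof of Theorem~\ref{finiteness}. Fix a minimal $(C_1,C_2,1,\ldots,1)$-graph $G$ with vertex-disjoint path families $\phi,\psi,\alpha_1,\ldots,\alpha_n$ (each $\alpha_j$ being a single path). The hubs of $G$ split into those already present in $G_{\phi\psi}$, bounded by $\mathscr{H}(C_1,C_2)=2C_1C_2$ via Theorem~\ref{Nmn2mn}, and the new hubs created by the $n$ unit paths. For each such path $\alpha_j$, I would argue that it contributes at most two new hubs: since $\alpha_j$ is a single path and $G$ is minimal (hence non-reroutable), $\alpha_j$ cannot return to the $(C_1,C_2)$-backbone after leaving it more than a bounded number of times without creating a consistent cycle, which Theorem~\ref{equivalent} forbids. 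The cleanest formalization is to apply the two-parameter bound $\mathscr{H}(C_i,1)=2C_i$ to the subgraph $G_{\phi\alpha_j}$ and $\mathscr{H}(1,1)=2$ to pairwise interactions among the unit paths, then assemble these into a global count of exactly $2n$ extra hubs.

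The step I expect to be the main obstacle is the upper bound's careful bookkeeping to avoid double-counting and, more seriously, to show the count is tight at exactly $2(C_1C_2+n)$ rather than a larger multiple. A naive application of the finiteness-style argument would yield a bound like $2C_1C_2 + 2n + (\text{cross terms among unit paths})$, and the danger is that interactions among the $n$ unit paths, and between the unit paths and the $(C_1,C_2)$-core, inflate the count. The key structural insight I would exploit is that a unit-capacity path is reroutable in a very restricted sense: any hub it creates corresponds to a crossing or branching that, by minimality and the no-consistent-cycle characterization, cannot be ``reused'' across different unit pairs. Concretely, I would show that each $\alpha_j$ can be charged at most $2$ hubs beyond the $2C_1C_2$ core, and that the $\alpha_j$'s do not create additional mutual hubs in a minimal graph, because two single paths crossing would be reroutable and hence contradict Theorem~\ref{equivalent}. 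Making this charging argument precise and confirming it matches the lower-bound construction is where the real work lies.
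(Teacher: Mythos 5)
Your lower bound is essentially the paper's: augment the extremal $(C_1,C_2)$-graph of Theorem~\ref{Nmn2mn} so that each unit path meets $\phi_1$ once, contributing two hubs apiece; that part is fine. The upper bound, however, has a genuine gap, and it starts with your appeal to ``$G$ is minimal (hence non-reroutable)'' together with Theorem~\ref{equivalent}. That equivalence is proved only for two source--sink pairs; as soon as a third pair is present it fails --- the paper itself exhibits, inside the proof of Theorem~\ref{thmh222}, a minimal but reroutable $(2,2,2)$-graph. So for $n\ge 1$ you cannot deduce non-reroutability from minimality, and the no-consistent-cycle criterion you lean on to limit how often a unit path returns to the backbone is simply not available.

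More seriously, your local charging claim --- each unit path contributes at most two new hubs beyond the $G_{\phi\psi}$-core --- is false as stated. If the core together with the remaining unit paths is disconnected, a single unit path can thread through several weakly connected components, picking up two new hubs per component: already for $C_1=C_2=1$ with $\phi$ and $\psi$ disjoint, the core has zero hubs and one unit path can acquire four new hubs (consistent with the bound $2(C_1C_2+1)=4$ only because the fragmented core is correspondingly cheap). This trade-off is exactly what the paper's proof is engineered to capture: it removes one unit path, splits each source and sink into per-path copies, re-identifies copies within each weakly connected component, and obtains $\omega$ components, the $i$-th being a minimal $(C_{1,i},C_{2,i},\underbrace{1,\ldots,1}_{t_i})$-graph; the inductive per-component bound $2\left[C_{1,i}+C_{2,i}+(C_{1,i}-1)^+ (C_{2,i}-1)^+ +t_i-1\right]$ carries a $-2$ per component that cancels the up-to-$2\omega$ new hubs of the removed path, and an estimate on $\bigl[\sum_i (C_{1,i}-1)^+\bigr]\cdot\bigl[\sum_i (C_{2,i}-1)^+\bigr]$ delivers $2(C_1C_2+t)$. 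Your alternative assembly from pairwise subgraph bounds cannot reach the target: $\mathcal{H}(G_{\phi\alpha_j})\le 2C_1$ bounds all hubs of that subgraph, not new ones, and summing over pairs gives totals of order $2C_1C_2+2nC_1+2nC_2+2n(n-1)$; your proposed rescue --- that unit paths create no mutual hubs because two single paths crossing would be reroutable --- is unproven and doubtful, since two unit paths of distinct commodities sharing a segment need not make either one reroutable. The decisive missing idea is the component-splitting induction; without it your argument yields finiteness (in the spirit of Theorem~\ref{finiteness}) but not the tight constant.
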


\begin{proof}
The case when $n=0$ is nothing but Theorem~\ref{Nmn2mn}. So we only have to prove the theorem when $n \geq 1$.

{\bf The ``$\leq$'' direction: } We will establish this direction using an inductive argument on $n$. Suppose the inequality holds when $n<t$. Consider a minimal $(C_1,C_2,\underbrace{1,1,\ldots,1}_t)$-graph $G$ with $C_1$ vertex-disjoint paths $\phi_1,\phi_2,\ldots,\phi_{C_1}$ from $S_1$ to $R_1$, $C_2$ vertex-disjoint paths $\psi_1,\psi_2,\ldots,\psi_{C_2}$ from $S_2$ to $R_2$, and a path $\beta_i$ from $S_i$ to $R_i$ for $3\le i\le t+2$. Let $G_1$ be the subgraph of $G$ induced on $$\phi_1,\phi_2,\ldots,\phi_{C_1},\psi_1,\psi_2,\ldots,\psi_{C_2},\beta_3,\beta_4,\ldots,\beta_{t+1}.$$
Now we split $S_1$ into $C_1$ copies $S_1^{(1)}, S_1^{(2)}, \ldots, S_1^{(C_1)}$;  $R_1$ into $C_1$ copies $R_1^{(1)}, R_1^{(2)}, \ldots, R_{1}^{(C_1)}$; $S_2$ into $C_2$ copies $S_2^{(1)}, S_2^{(2)}, \ldots, S_2^{(C_2)}$; $R_2$ into $C_2$ copies $R_2^{(1)}, R_2^{(2)}, \ldots, R_2^{(C_2)}$, such that $\phi_i$ has starting point $S_1^{(i)}$ and ending point $R_1^{(i)}$ for $1\le i\le C_1$; $\psi_i$ has starting point $S_2^{(i)}$ and ending point $R_2^{(i)}$ for $1\le i\le C_2$. Let $G_2$ denote the resulting graph and $\omega$ be the number of weakly connected components (which means connected components when disregarding the orientation) in $G_2$. Now for any $1\le i<j\le C_1$, we identify $S_1^{(i)}$ and $S_1^{(j)}$ if they belong to the same component and we perform similar operations on $R_1^{(i)}$'s, $S_2^{(i)}$'s and $R_2^{(i)}$'s. Let $\widehat{G}$ denote the resulting graph, which consists of $\omega$ connected components. Note that the $i$-th component $\widehat{G}_i$ is in fact a minimal $(C_{1,i},C_{2,i},\underbrace{1,1,\ldots,1}_{t_i})$-graph, where $
\sum_{i=1}^\omega C_{1,i}=C_1$, $\sum_{i=1}^\omega C_{2,i}=C_2$ and $\sum_{i=1}^\omega t_i=t-1$. Notice that for any $i$, at least one of $C_{1,i}$, $C_{2,i}$ and $t_i$ is nonzero, but some $C_{1,i}$'s, $C_{2,i}$'s, $t_i$'s can be zero, for which case, a $(C_{1,i},C_{2,i},\underbrace{1,1,\ldots,1}_{t_i})$-graph can be interpreted as if all zero-valued parameters are simply dropped. For each component $\widehat{G}_i$, by the induction hypothesis, we have

\begin{displaymath}
\mathcal{H}(\widehat{G}_i)\le\begin{cases}
\ 2(C_{1,i}C_{2,i}+t_i) & \textrm{  if }C_{1,i},C_{2,i}\ge 1,\\
\ 2(C_{2,i}+t_i-1) & \textrm{  if }C_{1,i}=0,C_{2,i}\ge 1,\\
\ 2(C_{1,i}+t_i-1) & \textrm{  if }C_{1,i}\ge 1,C_{2,i}=0,\\
\ 2(t_i-1) & \textrm{  if }C_{1,i},C_{2,i}=0.
\end{cases}
\end{displaymath}
Notice that the right hand side of the above inequality can be unified as
$$2\left[C_{1,i}+C_{2,i}+(C_{1,i}-1)^+\cdot(C_{2,i}-1)^++t_i-1\right],$$
for any $C_{1,i},C_{2,i}\ge 0$; here, $(\cdot)^+=\max\{\cdot,0\}$.

A hub in $G$ is said to be {\it new} if it is not a hub in $\widehat{G}$. Notice that the path $\beta_t$ ``meets'' each component at most once, yielding at most two new hubs (more precisely, it meets one edge in $\widehat{G}_i$, then departs from one edge in $\widehat{G}_i$, and thereafter, it will never meet any edge in $\widehat{G}_i$ again), otherwise $G$ is not minimal. Hence,
\begin{align*}
\mathcal{H}(G)
\le & \ 2\omega+\sum_{i=1}^\omega \mathcal{H}(\widehat{G}_i)\\
\le & \ 2\omega+\sum_{i=1}^\omega 2\left[C_{1,i}+C_{2,i}+(C_{1,i}-1)^+\cdot(C_{2,i}-1)^++t_i-1\right]\\
\le & \ 2\omega+2(C_1+C_2+t-1-\omega)+2\left[\sum_{i=1}^\omega(C_{1,i}-1)^+\right]\cdot\left[\sum_{i=1}^\omega(C_{2,i}-1)^+\right]\\
\le & \ 2(C_1+C_2+t-1)+2(C_1-1)(C_2-1)\\
=   & \ 2(C_1C_2+t).
\end{align*}

{\bf The ``$\ge$'' direction: } It suffices to construct a minimal $(C_1,C_2,\underbrace{1,1,\ldots,1}_n)$-graph with $2(C_1C_2+n)$ hubs; see Figure~\ref{picn1122} for an example.
\begin{figure}[h]
\centering
\includegraphics[width=0.48\textwidth]{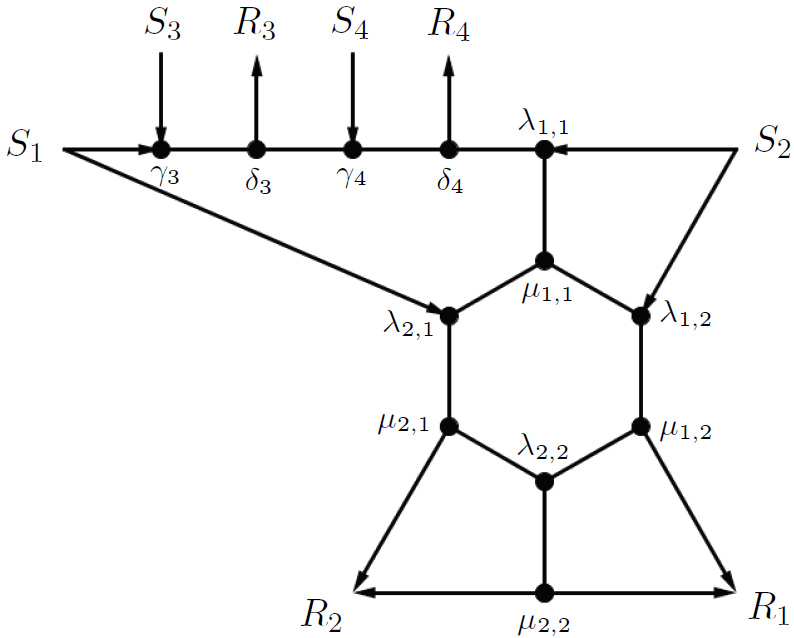}
\caption{A minimal $(2,2,1,1)$-graph with $12$ hubs.}\label{picn1122}
\end{figure}
It turns out the graph $G$, described below in detail, is such a graph: $G$ is naturally orientable; there is $C_1$ vertex-disjoint paths $\phi_1,\phi_2,\ldots,\phi_{C_1}$ from $S_1$ to $R_1$, $C_2$ vertex-disjoint paths $\psi_1,\psi_2,\ldots,\psi_{C_2}$ from $S_2$ to $R_2$, and a path $\beta_i$ from $S_i$ to $R_i$ for $3\le i\le n+2$; paths $\phi_i$ and $\psi_j$ meet at vertex $\lambda_{i,j}$ and depart at vertex $\mu_{i,j}$ for $1\le i\le C_1$ and $1\le j\le C_2$; paths $\beta_i$ and $\phi_1$ meet at vertex $\gamma_i$ and depart at vertex $\delta_i$ for $3\le i\le n+2$. Furthermore, in $\overrightarrow{G}$, we have
\begin{itemize}
\item for $3\le i\le n+2$, on path $\beta_i$,
$$\gamma_i<\delta_i;$$
\item on path $\phi_1$,
$$\gamma_3<\delta_3<\gamma_4<\delta_4<\cdots<\gamma_{n+2}<\delta_{n+2}<\lambda_{1,1}<\mu_{1,1}<\lambda_{1,2}<\mu_{1,2}<\cdots<\lambda_{1,C_2}<\mu_{1,C_2};$$
\item for $2\le i\le C_1$, on path $\phi_i$,
$$\lambda_{i,1}<\mu_{i,1}<\lambda_{i,2}<\mu_{i,2}<\cdots<\lambda_{i,C_2}<\mu_{i,C_2};$$
\item for $1\le j\le C_2$, on path $\psi_j$,
$$\lambda_{1,j}<\mu_{1,j}<\lambda_{2,j}<\mu_{2,j}<\cdots<\lambda_{C_1,j}<\mu_{C_1,j}.$$
\end{itemize}
It can be easily checked that this graph is minimal. The proof is then complete.
\end{proof}

\begin{thm}\label{thmh222}
$$\mathscr{H}(2,2,2)=12.$$
\end{thm}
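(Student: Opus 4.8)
The plan is to prove the two bounds separately, leveraging the value $\mathscr{H}(2,2,1,1)=2(2\cdot 2+2)=12$ already established in Theorem~\ref{thm11mn}, since this numerical coincidence is exactly what the argument should exploit.

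For the lower bound $\mathscr{H}(2,2,2)\ge 12$, I would exhibit a single minimal $(2,2,2)$-graph with exactly $12$ hubs. The cleanest route is to start from the extremal minimal $(2,2,1,1)$-graph $H$ underlying Figure~\ref{picn1122}, which has $12$ hubs and whose two unit paths $\beta_3,\beta_4$ are vertex-disjoint (they touch $\phi_1$ only on the disjoint subsegments $\phi_1[\gamma_3,\delta_3]$ and $\phi_1[\gamma_4,\delta_4]$). Identifying $S_3$ with $S_4$ and $R_3$ with $R_4$ turns $\beta_3,\beta_4$ into two vertex-disjoint paths $\chi_1,\chi_2$ from a common source to a common sink, so the minimum vertex-cut of the merged pair is exactly $2$, while the cuts of the first two pairs are untouched and remain $2$. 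Since merging terminals affects no interior vertex, the hub count stays $12$, and one verifies via the consistent-cycle characterization (Theorem~\ref{equivalent}, in the three-pair form used in the proof of Theorem~\ref{finiteness}) that the merged graph is still minimal.

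For the upper bound $\mathscr{H}(2,2,2)\le 12$, I would adapt the splitting-and-decomposition scheme from the proof of Theorem~\ref{thm11mn}. Given a minimal $(2,2,2)$-graph $G$ with path groups $\{\phi_1,\phi_2\}$, $\{\psi_1,\psi_2\}$, $\{\chi_1,\chi_2\}$, I would isolate one path of the cut-$2$ third pair, say $\chi_2$, and set $G_1:=G_{\phi\psi\chi_1}$, the subgraph carrying only $\phi_1,\phi_2,\psi_1,\psi_2,\chi_1$. After rerouting to make $G_1$ minimal, it is a minimal $(2,2,1)$-graph, so $\mathcal{H}(G_1)\le \mathscr{H}(2,2,1)=2(4+1)=10$ by Theorem~\ref{thm11mn}. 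Mirroring that proof, I would split the sources and sinks of the first two pairs into per-path copies, re-identify copies lying in the same weakly connected component, and obtain $\omega$ components $\widehat{G}_i$, each a smaller minimal $(C_{1,i},C_{2,i},\cdot)$-graph (with $\chi_1$ lying in a single component). Bounding each $\mathcal{H}(\widehat{G}_i)$ by the unified estimate from Theorems~\ref{Nmn2mn} and~\ref{thm11mn}, and adding at most two new hubs for every component through which the reinstated path $\chi_2$ passes, the same arithmetic as in Theorem~\ref{thm11mn}—driven by $(C_1-1)(C_2-1)=1$—collapses the total to $\mathcal{H}(G)\le 12$.

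The main obstacle is the last step: controlling how the second path $\chi_2$ of the cut-$2$ pair threads through the decomposition. In Theorem~\ref{thm11mn} the peeled path is a genuine unit pair, so if it met a component twice the pair would be reroutable and minimality of $G$ would be contradicted at once; here $\chi_2$ is only one half of a cut-$2$ pair, and an alternative route for $\chi_2$ that passes through $\chi_1$ is \emph{not} a legitimate rerouting of the pair, so the naive ``reroute $\Rightarrow$ non-minimal'' reasoning breaks down. Concretely, such a detour produces a cycle that mixes $\chi_1$- and $\chi_2$-edges and need be neither $\phi$-, $\psi$-, nor $\chi$-consistent, so the minimality of $G$ does not forbid it outright. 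Overcoming this requires a refined consistent-cycle analysis showing that, precisely because $\chi_1$ and $\chi_2$ must stay vertex-disjoint, $\chi_2$ still meets each component at most once and hence contributes at most two new hubs; this is the delicate point that separates $\mathscr{H}(2,2,2)$ from $\mathscr{H}(2,2,1,1)$ even though the two values coincide.
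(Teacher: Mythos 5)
Your lower bound is fine: merging $S_3$ with $S_4$ and $R_3$ with $R_4$ in the extremal $(2,2,1,1)$-graph does produce a $(2,2,2)$-graph with $12$ hubs, essentially equivalent to the explicit example the paper gives (its Figure~\ref{n222with12iv}); just note that to certify minimality you may only use the direction ``non-reroutable $\Rightarrow$ minimal'' of Theorem~\ref{equivalent}, which does generalize to three pairs, whereas the full equivalence does not.

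The upper bound, however, contains a genuine gap, and you have located it yourself without closing it. Your entire scheme rests on two claims: (a) the component of the decomposition containing $\chi_1$ can be treated as a \emph{minimal} $(C_{1,i},C_{2,i},1)$-graph, and (b) the reinstated path $\chi_2$ meets each component at most once, contributing at most two new hubs. In Theorem~\ref{thm11mn} both claims follow from ``reroutable $\Rightarrow$ not minimal,'' i.e.\ from Theorem~\ref{equivalent} applied to independent unit pairs. For the pair $(\chi_1,\chi_2)$ this implication is simply false: a rerouting of $\chi_1$ inside its component, or of $\chi_2$ through a component, may collide with the other $\chi$-path and hence is not a legitimate rerouting of the commodity, so minimality of $G$ forbids nothing. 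This is not a technicality to be patched by ``a refined consistent-cycle analysis'': the paper exhibits a minimal but reroutable $(2,2,2)$-graph (Figure~\ref{N222minimalreroutable}), which is exactly the phenomenon your claims (a) and (b) would have to exclude, and which shows the $(2,2,1,1)$ decomposition machinery does not transfer. The deferred ``delicate point'' is the whole content of the hard direction. The paper's actual proof takes a completely different route: assuming $\mathcal{H}(G)\ge 13$, it observes that one of $G_{\phi\psi},G_{\phi\xi},G_{\psi\xi}$ has at least $5$ hubs, classifies the shrunken two-commodity graph $G_{\phi\psi}^{\bullet}$ into three possible shapes, and performs an exhaustive location-by-location analysis of where new $\xi$-edges can attach, deriving a rerouting contradiction in every case; the minimal-but-reroutable case is then handled separately via a shortest $\xi$-consistent cycle in $G_{\psi\xi}$, which reduces it to two of the same three cases. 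Until you supply an argument replacing that case analysis (or proving (a) and (b) despite the reroutability counterexample), your proposal establishes only the lower bound.
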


\begin{proof}

First, it can be verified that the graph in Figure~\ref{n222with12iv} is a minimal $(2,2,2)$-graph with $12$ hubs, which implies that
$$
\mathscr{H}(2,2,2)\ge 12.
$$

\begin{figure}[h]
\centering
\includegraphics[width=0.48\textwidth]{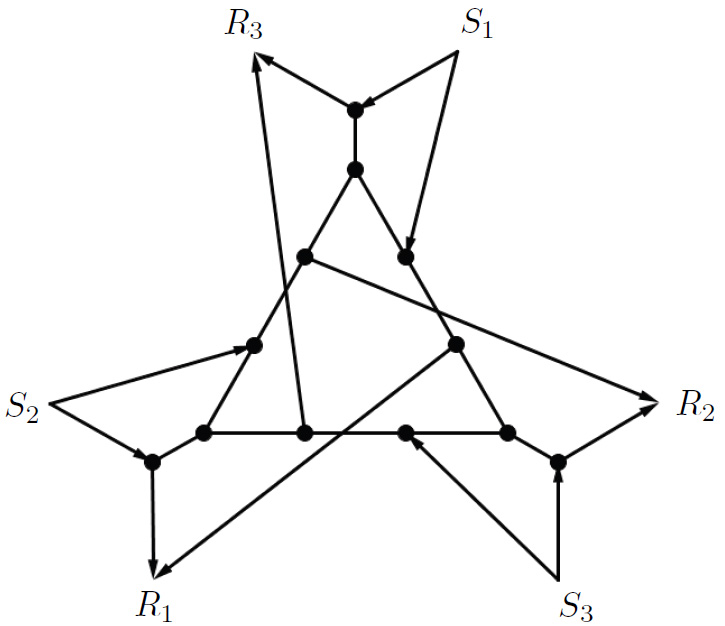}
\caption{a minimal $(2,2,2)$-graph with $12$ hubs}\label{n222with12iv}
\end{figure}

So we only need to prove the other direction. By contradiction, suppose that a minimal $(2,2,2)$-graph $G$ has at least $13$ hubs, and there exists a set of two vertex-disjoint path $\phi=\{\phi_1,\phi_2\}$ from $S_1$ to $R_1$, a set of two vertex-disjoint paths $\psi=\{\psi_1,\psi_2\}$ from $S_2$ to $R_2$, and a set of two vertex-disjoint paths $\xi=\{\xi_1, \xi_2\}$ from $S_3$ to $R_3$.

Here, we note that when there are three pairs of sources and sinks, the equivalence statements as in Theorem~\ref{equivalent} do not hold any more: it turns out that a minimal graph can be reroutable (see Figure~\ref{N222minimalreroutable}).

\begin{figure}[h]
\centering
\includegraphics[width=0.45
\textwidth]{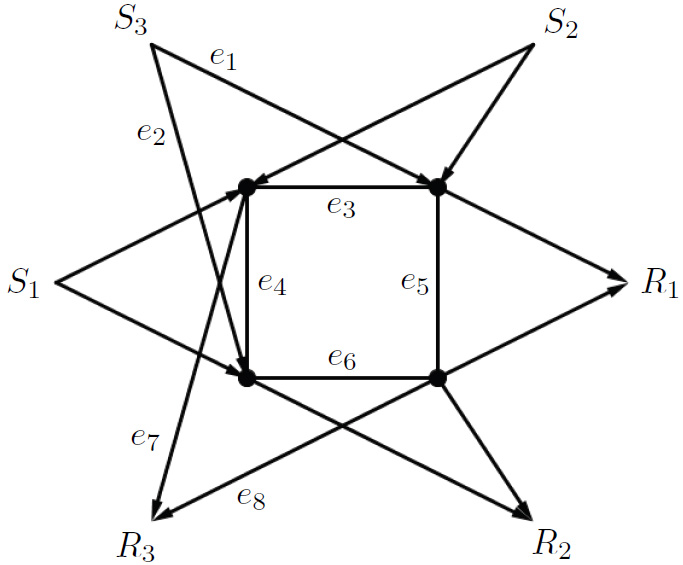}
\caption{A minimal but reroutable $(2,2,2)$-graph: the graph is minimal, but we can choose either $\{e_1\circ e_3\circ e_7, e_2\circ e_6\circ e_8\}$ or $\{e_1\circ e_5\circ e_8, e_2\circ e_4\circ e_7\}$ to be a set two vertex-disjoint paths from $S_3$ to $R_3$.}\label{N222minimalreroutable}
\end{figure}

{\bf We first consider the case $G$ is minimal and non-reroutable.} Let $G_{\phi\psi}$ be the subgraph of $G$ induced on the edges of $\phi$-paths and $\psi$-paths; similarly, we define $G_{\phi\xi}$ and $G_{\psi\xi}$. By Theorem~\ref{equivalent}, these three subgraphs are all minimal, since they are all non-reroutable. Suppose $G_{\phi\psi}$ has the most hubs among them. Every hub in $G$ belongs to at least one of these three subgraphs, so we have
$$13\le \mathcal{H}(G)\le \mathcal{H}(G_{\phi\psi})+\mathcal{H}(G_{\phi\xi})+\mathcal{H}(G_{\psi\xi})\le 3\mathcal{H}(G_{\phi\psi}),$$
and hence $\mathcal{H}(G_{\phi\psi})\ge 5$.

Now, we transform $G_{\phi\psi}$ into a corresponding graph $G_{\phi\psi}^\bullet$ by shrinking each public edge into a vertex (this can be regarded as the ``inverse'' operation of Step 2 in Section~\ref{sectionminimalrepresentation}). In more detail, for a public edge $(v_1,v_2)$, say $e_1=(v_1,u_1), e_2=(v_1,u_3)$ are two private edges incident with $v_1$, and $e_3=(v_2,u_2)$, $e_4=(v_2,u_4)$ are two private edges incident with $v_2$. Then we delete edges $e_1$, $e_2$, $e_3$, $e_4$ and vertices $v_1$, $v_2$, and then add a new vertex $v$ and edges $(v,u_1)$, $(v,u_2)$, $(v,u_3)$, $(v,u_4)$. If $G_{\phi\psi}$ has at least five hubs, then as shown in Figure~\ref{N222threecases}, up to isomorphism, $G_{\phi\psi}^\bullet$ has three possibilities (note that the first and second are different).
\begin{figure}[h]
\centering
\includegraphics[width=1\textwidth]{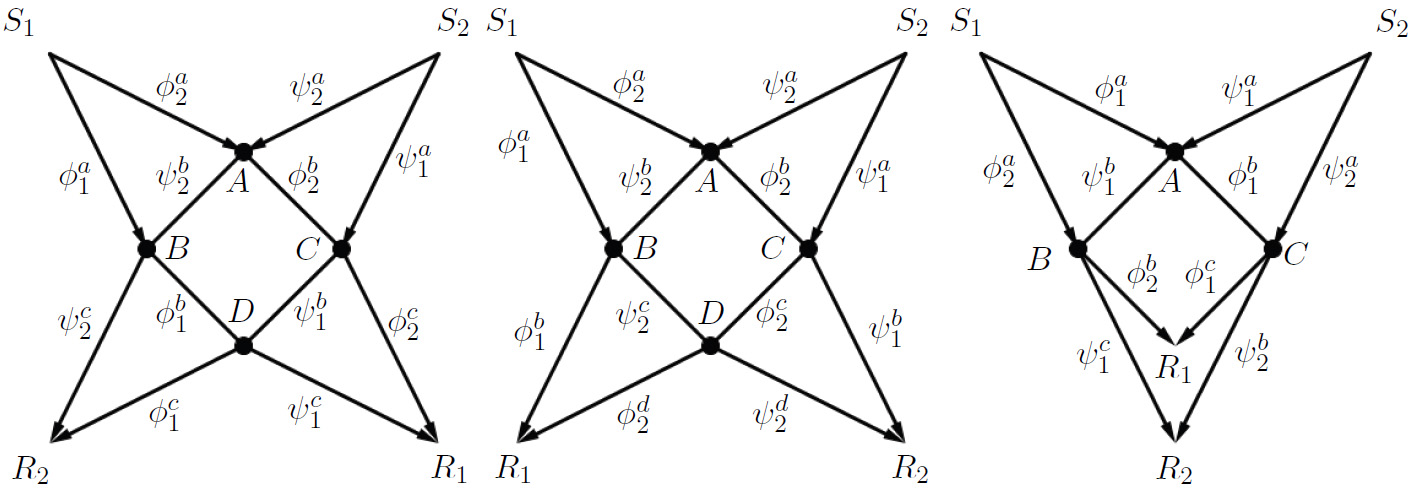}
\caption{Three possibilities of $G_{\phi\psi}^\bullet$}\label{N222threecases}
\end{figure}

Next, we examine in the ways one can add $\xi$-paths into $G_{\phi \psi}$ to form $G$ such that $G$ is minimal. In the following, a hub in $G$ is said to be $\emph{new}$ if it is not a hub in $G_{\phi\psi}$, and an edge is said to be $\emph{new}$ if it does not belong to $G_{\phi\psi}$ and not incident with $S_3$ or $R_3$.

\textbf{Case 1:} This case is shown in Figure~\ref{N222threecases}(a), where each edge is labeled. Since $G_{\phi\psi}^\bullet$ has four hubs, $G_{\phi\psi}$ has at most eight hubs. And since $\mathcal{H}(G)\ge 13$, $\xi_1$, $\xi_2$ have to be added to generate at least $5$ new hubs. Without loss of generality, say, $\xi_1$ contains at least three new hubs. Observe that each of these new hubs is incident with exactly one $\xi_1$-edge that does not belong to any $\phi$ or $\psi$-path, and $S_3$ and $R_3$ are also incident with one such $\xi_1$-edge. So at least $\lceil(3+2)/2\rceil=3$ edges of $\xi_1$ are exclusively owned by $\xi_1$, and thereby at least one of them, say $(v_1, v_2)$, is a new edge (not incident with $S_3$ or $R_3$). We next discuss the possible locations of $(v_1, v_2)$; here, notice that it is possible that either $v_1$ or $v_2$ is not a new hub.

Suppose $v_1\in \phi_2^a$ (this means $v_1$ is one of vertices in $\phi_2^a$, including its two end vertices). Then, one verifies that if $v_2\in \phi_1^b,\phi_1^c,\phi_2^a,\phi_2^b,\phi_2^c,\psi_1^a,\psi_1^b,\psi_1^c,\psi_2^a$ or $\psi_2^b$, then $\phi$ is reroutable; if $v_2\in \phi_1^a$ or $\psi_2^c$, then $\psi$ is reroutable. For example, if $v_2\in \psi_2^c$, then we can find another two vertex-disjoint $\psi$-paths
\begin{align*}
\psi_1'&=\psi_1,\\
\psi_2'&=\psi_2[S_2,A]\circ \phi_2[A,v_1]\circ (v_1,v_2)\circ \psi_2[v_2,R_2]
\end{align*}
from $S_2$ to $R_2$, and thus $G_{\phi\psi}$ is reroutable. And if $v_2\in \phi_1^c$, then we can find another two vertex-disjoint paths
\begin{align*}
\phi_1'&=\phi_1[S_1,B]\circ \psi_2[B,A]\circ \phi_2[A,R_1],\\
\phi_2'&=\phi_2[S_1,v_1]\circ (v_1,v_2)\circ \phi_1[v_2,R_1]
\end{align*}
from $S_1$ to $R_1$, and thus $G_{\phi\psi}$ is reroutable.

Suppose $v_1\in \phi_2^b$. Then, one verifies that if $v_2\in \phi_1^a,\phi_1^c,\phi_2^a,\phi_2^b,\phi_2^c,\psi_1^a,\psi_1^b,\psi_2^a$ or $\psi_2^b$, then $\phi$ is reroutable; and if $v_2\in \phi_1^b,\psi_1^c$ or $\psi_2^c$, $\psi$ is reroutable.

By symmetry, if $v_1\in \phi_1^a,\phi_1^b,\phi_1^c,\phi_2^c,\psi_1^a,\psi_1^b,\psi_1^c,\psi_2^a,\psi_2^b$ or $\psi_2^c$, $G_{\phi\psi}$ is reroutable.

Hence, we only need to check the last possible case: all new edges (including $(v_1,v_2)$) are incident with public edges in $G_{\phi\psi}$. By symmetry, say, $v_1\in A$. Then, if $v_2\in A$ or $B$, $\psi$ is reroutable; if $v_2\in C$, $\phi$ is reroutable; if $v_2\in D$, there must be a new edge $(v_1',v_2')$ of $\xi_2$ with ($v_1'\in B$ and $v_2'\in C$) or ($v_1'\in D$ and $v_2'\in A$) (see Figure~\ref{N222case1and2finalsentence}(a)(b)), since otherwise $G\setminus \{(v_1,v_2)\}$ is still in $\mathcal{N}(2,2,2)$, which contradicts the fact that $G$ is minimal. However, in both cases, we can still find another two vertex-disjoint paths
\begin{align*}
\phi_1'&=\phi_1[S_1,v_1']\circ (v_1',v_2')\circ \phi_2[v_2',R_1],\\
\phi_2'&=\phi_2[S_1,v_1]\circ (v_1,v_2)\circ \phi_1[v_2,R_1]
\end{align*}
from $S_1$ to $R_1$, and thus $G_{\phi\psi}$ is reroutable.

\begin{figure}[h]
\centering
\includegraphics[width=1\textwidth]{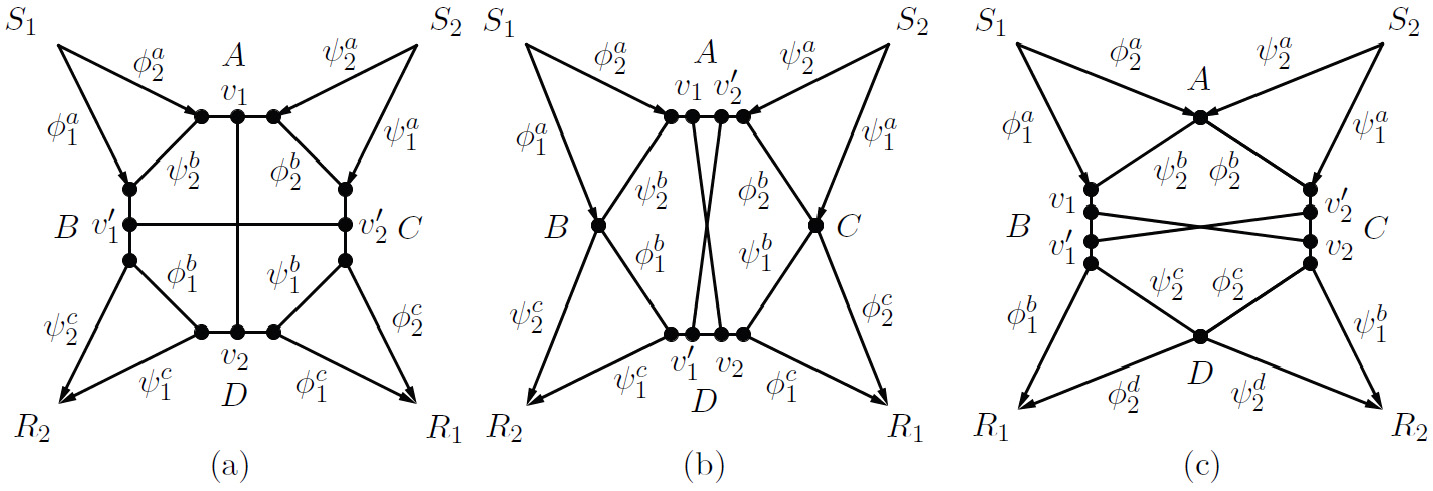}
\caption{The possible locations of the new edge $(v_1',v_2')$.}\label{N222case1and2finalsentence}
\end{figure}

\textbf{Case 2:} This case is shown in Figure~\ref{N222threecases}(b). Similarly as Case 1, we can find a new edge $(v_1,v_2)$ of $\xi_1$.

Suppose $v_1\in \phi_1^a$. Then, one verifies that if $v_2\in \phi_1^a,\phi_1^b,\phi_2^b,\phi_2^c,\phi_2^d,\psi_1^a,\psi_1^b,\psi_2^a,\psi_2^b,\psi_2^c$ or $\psi_2^d$, then $\phi$ is reroutable; if $v_2\in \phi_2^a$, then $\psi$ is reroutable.

Suppose $v_1\in \phi_2^a$. Then, one verifies that if $v_2\in \phi_1^b,\phi_2^a,\phi_2^b,\phi_2^c,\phi_2^d,\psi_1^a,\psi_1^b,\psi_2^a,\psi_2^b,\psi_2^c$ or $\psi_2^d$, then $\phi$ is reroutable; if $v_2\in \phi_1^a$, then $\psi$ is reroutable.

Suppose $v_1\in \phi_2^b$. Then, one verifies that if $v_2\in \phi_1^a,\phi_1^b,\phi_2^a,\phi_2^b,\phi_2^c,\phi_2^d,\psi_1^a,\psi_1^b,\psi_2^a,\psi_2^b,\psi_2^c$ or $\psi_2^d$, then $G_{\phi\psi}$ is reroutable.

By symmetry, if $v_1\in \phi_1^b,\phi_2^c,\phi_2^d,\psi_1^a,\psi_1^b,\psi_2^a,\psi_2^b,\psi_2^c$ or $\psi_2^d$, then $G_{\phi\psi}$ is reroutable.

Hence, we only need to check the last possible case: all new edges (including $(v_1',v_2')$) are incident with public edges in $G_{\phi\psi}$. By symmetry, say $v_1\in A$ or $B$. If $v_1\in A$ and $v_2\in A,C$ or $D$, then $\phi$ is reroutable; if $v_1\in A$ and $v_2\in B$, then $\psi$ is reroutable; If $v_1\in B$ and $v_2\in A,B$ or $D$, $\psi$ is reroutable; If $v_1\in B$ and $v_2\in C$, there must be a new edge $(v_1',v_2')$ of $\xi_2$ with $v_1'\in B$ and $v_2'\in C$ (see Figure~\ref{N222case1and2finalsentence}(c)), since otherwise $G\setminus\{(v_1,v_2)\}$ is still in $\mathcal{N}(2,2,2)$, which contradicts the fact that $G$ is minimal. However, we can still find another two vertex-disjoint paths
\begin{align*}
\phi_1'&=\phi_1[S_1,v_1]\circ (v_1,v_2)\circ \phi_2[v_2,R_1],\\
\phi_2'&=\phi_2[S_1,v_2']\circ (v_2',v_1')\circ \phi_1[v_1',R_1]
\end{align*}
from $S_1$ to $R_1$, and thus $G_{\phi\psi}$ is reroutable.

\textbf{Case 3:} This case is shown in Figure~\ref{N222threecases}(c), where each edge is also labeled. Since $G_{\phi\psi}^\bullet$ has three hubs, $G_{\phi\psi}$ has at most six hubs. And since $\mathcal{H}(G)\ge 13$, $\xi_1$, $\xi_2$ have to be added to generate at least $7$ new hubs. So at least $\lceil(7+2\times 2)/2\rceil=6$ edges of $\xi_1$ and $\xi_2$ do not belong to any $\phi$ or $\psi$-path, and thereby at least two of them, say, $(v_1,v_2)$ and $(v_1',v_2')$ are two new edges. We next discuss their possible locations.

Suppose $v_1\in \phi_1^a$. Then, one verifies that if $v_2\in \phi_1^a,\phi_1^b,\phi_1^c,\phi_2^b,\psi_1^a,\psi_1^b,\psi_2^a,\psi_2^b,A$ or $C$, then $\phi$ is reroutable; if $v_2\in \phi_2^a,\psi_1^c$ or $B$, then $\psi$ is reroutable.

Suppose $v_1\in \phi_1^b$. Then, one verifies that if $v_2\in \phi_1^a,\phi_1^b,\phi_1^c,\phi_2^a,\psi_1^a,\psi_1^b,\psi_2^a,\psi_2^b,A$ or $C$, then $\phi$ is reroutable; if $v_2\in \phi_2^b,\psi_1^c$ or $B$, then $\psi$ is reroutable.

Suppose $v_1\in \phi_2^a$. Then, one verifies that if $v_2\in \phi_1^b,\phi_1^c,\phi_2^a,\phi_2^b,\psi_1^b,\psi_1^c,\psi_2^a,\psi_2^b,B$ or $C$, then $\phi$ is reroutable; if $v_2\in \phi_1^a,\psi_1^a$ or $A$, then $\psi$ is reroutable.

Suppose $v_1\in A$. Then, one verifies that if $v_2\in \phi_1^a,\phi_1^b,\phi_1^c,\psi_1^a,\psi_1^b,\psi_2^a,\psi_2^b,A$ or $C$, then $\phi$ is reroutable; if $v_2\in \phi_2^a,\phi_2^b,\psi_1^c$ or $B$, then $\psi$ is reroutable.

By symmetry, if $v_1\in \psi_1^a,\psi_1^b$ or $\psi_2^a$, then $G_{\phi\psi}$ is reroutable.

Let $G_B$ be the subgraph of $G$ induced on the edges of $\psi_1[B,R_2]$ and $\phi_2[B,R_1]$ and $G_C$ be the subgraph induced on the edges of $\phi_1[C,R_1]$ and $\psi_2[C,R_2]$. If $v_1, v_2\in G_B$, either $\phi$ or $\psi$ is reroutable, and hence $v_1\not \in G_B$ or $v_2\not \in G_B$. Similarly, we deduce that $v_1\not \in G_C$ or $v_2\not \in G_C$. Then we just need to check the last possibility: $v_1,v_1'\in G_B$ and $v_2,v_2'\in G_C$, for which we can also easily rule out the subcases when (1) $v_1\in \phi_1$ and $v_1'\in \psi_2$ (2) $v_2\in \phi_2$ and $v_2'\in \psi_1$ (3) $v_1,v_1'\in B$ (4) $v_2,v_2'\in C$ (5) $v_1,v_1'\in \phi_2$ and $v_2,v_2'\in \phi_1$ (6) $v_1,v_1'\in \psi_1$ and $v_2,v_2'\in \psi_2$. So, in the following, we examine the remaining two subcases:

(1) $v_1,v_1'\in \phi_2$ and $v_2,v_2'\in \psi_2$. By symmetry, we assume that $v_1<v_1'$ on $\phi_2$ in $\overrightarrow{G}_{\phi\psi}$. If $v_2<v_2'$, then $\xi$ is reroutable; If $v_2>v_2'$, then $\phi$ or $\xi$ is reroutable.

(2) $v_1,v_1'\in \psi_1$ and $v_2,v_2'\in \phi_1$. By symmetry, we assume $v_1<v_1'$ on $\psi_1$ in $\overrightarrow{G}_{\phi\psi}$. If $v_2<v_2'$, then $\xi$ is reroutable; If $v_2>v_2'$, then $\psi$ is reroutable.
Now, we are ready to conclude that any minimal and non-reroutable graph in $\mathcal{N}(2,2,2)$ has at most $12$ hubs.

{\bf We next consider the case when $G$ is a minimal but reroutable $(2,2,2)$-graph}. Suppose $\xi$ is reroutable in the subgraph $G_{\psi\xi}$. By Theorem~\ref{equivalent}, there exists a $\xi$-consistent cycle
$$p_1 \circ e_1 \circ p_2 \circ e_2\circ \cdots \circ p_d\circ e_d,$$
where each $p_i$ is a subpath of some $\xi$-path and each $e_i$ is a private $\psi$-edge (in $G_{\psi \xi}$). We further assume that $d$ (the number of private $\psi$-edges) is the smallest among all possible $\xi$-consistent cycles in $G_{\psi \xi}$. Then, each $e_i$ belongs to a different $\psi$-path, which implies $d\le 2$. Since $G$ is minimal, each $\xi$-private edge (in $G_{\psi \xi}$) in $p_1,p_2,\ldots,p_d$ must belong to $\phi$-paths as well. So, each edge of the $\xi$-consistent cycle in $G$ belongs to either some $\phi$-path or $\psi$-path. Note that for any edge $e$ of this cycle, $G_{\phi\psi} \backslash \{e\}$ does not belong to $\mathcal{N}(C_1, C_2)$, since otherwise, together with the observation that $\xi$ can be rerouted using edges in $G \backslash \{e\}$, we deduce that $G \backslash \{e\} \in \mathcal{N}(C_1, C_2, C_3)$, which contradicts the minimality of $G$. Then we can find a minimal subgraph $\widehat{G}_{\phi\psi}$ of $G_{\phi\psi}$, which is of either Case 1 (when $d=2$) or Case 2 (when $d=1$). Let $G_\xi$ be the subgraph induced on the edges of $\xi$-paths. Then $G=\widehat{G}_{\phi\psi} \cup G_\xi$, by the minimality of $G$. Notice that all contradictions in Case 1 and Case 2 arise from the rerouting of $\phi$ or $\psi$-paths. So we can follow similar arguments and conclude $\mathcal{H}(G)\le 12$. The proof is then complete.
\end{proof}

\noindent {\bf Acknowledgements:} We are indebted to Weiping Shang for insightful discussions during the early stage of this work.

\end{document}